\def\eqref#1{(\ref{#1})}
\def\dsp{\displaystyle}
\def\Frac#1#2{\frac
{
 {\raise.6ex
 \hbox{$\displaystyle#1$}}
}
{
 {\lower.6ex
 \hbox{$\displaystyle#2$}}
 }
}
\numberwithin{equation}{section}
\newtheorem{theorem}{Theorem}[section]
\newtheorem{lemma}[theorem]{Lemma}
\newtheorem{corollary}[theorem]{Corollary}
\newtheorem{example}[theorem]{Example}
\newtheorem{remark}[theorem]{Remark}
\def\eoproof{{\unskip\nobreak\hfil\penalty50	
\hskip2em\hbox{}\nobreak\hfil$\square$
\parfillskip=0pt\finalhyphendemerits=0\medbreak}}
\def\eps{\varepsilon}
\def\dsp{\displaystyle}
\def\Frac#1#2{\frac
{
 {\raise.6ex
 \hbox{$\displaystyle#1$}}
}
{
 {\lower.6ex
 \hbox{$\displaystyle#2$}}
 }
}
\def\sign{{\rm sign}}
\def\CHF#1#2#3{
{}_1F_1\left(
\begin{array}{c}
\begin{array}{cc} \hskip-10pt#1 \end{array}\\
\begin{array}{c}  \hskip-10pt#2 \end{array}
\end{array}
\hskip-8pt;\,#3
\right)}
\def\CHFs#1#2#3{
{}_1F_1\left({a};{c};{z}\right)
}
\def\binom#1#2{
\renewcommand{\arraystretch}{0.9}
\left(
\begin{array}{c}
\begin{array}{c}\hskip-10pt#1\end{array}\\
\begin{array}{c}\hskip-10pt#2\end{array}
\end{array}
\hskip-10pt
\renewcommand{\arraystretch}{1.0}
\right)}
\def\erfc{{\rm erfc}}
\def\bigO{{\cal O}}
\def\tfrac#1#2{{{\lower.6ex
\hbox{$\scriptstyle#1$}}\over 
{\raise.7ex
\hbox{$\scriptstyle#2$}}}}
\def\RR{\mathbb R}
\def\sign{{\rm sign}}
\def\protectbold#1{\protect{\boldmath{$#1$}}}
\def\erf{{{\rm erf}}}
\def\erfc{{{\rm erfc}}}
\def\erfc{{\rm erfc}}
\def\tfrac#1#2{{{\lower.6ex
\hbox{$\scriptstyle#1$}}\over 
{\raise.7ex
\hbox{$\scriptstyle#2$}}}}
 \title{New asymptotic representations of the \\
 noncentral $t$-distribution}
\author{
Amparo Gil\footnotemark[1]
 \and
Javier Segura\footnotemark[2]
\and
Nico M. Temme\footnotemark[3]
\\
}
\begin{document}

\maketitle

\renewcommand{\thefootnote}{\fnsymbol{footnote}}

\footnotetext[1]{Departamento de Matem\'atica Aplicada y CC. de la Computaci\'on.
ETSI Caminos. Universidad de Cantabria. 39005-Santander, Spain.   }
\footnotetext[2]{ Departamento de Matem\'aticas, Estadistica y 
        Computaci\'on. Universidad de Cantabria, 39005 Santander, Spain.  }
\footnotetext[3]{IAA, 1825 BD 25, Alkmaar, The Netherlands. Former address: Centrum Wiskunde \& Informatica (CWI), Science Park 123, 1098 XG Amsterdam,  The Netherlands. \\ Email: Nico.Temme@cwi.nl }

\begin{abstract}

New asymptotic approximations of the non-central $t$ distribution are given, a generalization of the Student's $t$ distribution. Using new integral representations, we give new asymptotic expansions for large values of the noncentrality parameter but also for large values of the degrees  of freedom parameter. In some case we accept more than one large parameter. These results are in terms of elementary functions, but also in terms of the complementary error function and the incomplete gamma function. A number of numerical tests demonstrate the performance of the asymptotic approximations.

\end{abstract}

\section{Introduction}\label{sec:intro}

The noncentral $t$-distribution has several applications in engineering, biology and other scientific areas; see for example
\cite{Cashman:2007:SMF}, \cite{Lin:2016:SMF}, \cite{MacMullan:1998:SMF}. 
There is a vast literature on the Student's $t$ distribution and its generalizations, including the noncentral distribution. For a recent review, see  \cite{Li:2020:RSD}; and for the particular case of the noncentral $t$ distribution see \cite{Owen:1968:PNT}. 

We introduce the following notation of the noncentral $t$-distribution, which we denote by $F_n(x;\delta)$. Let $x\ge 0$, $\delta\in\RR$, and $n>0$. Then we define
\begin{equation}\label{eq:intro01}
\begin{array}{@{}r@{\;}c@{\;}l@{}}
F_n(x;\delta)&=&\dsp{\tfrac12\erfc\left(\delta/\sqrt2\right)+P_n(x;\delta)+Q_n(x;\delta),}\\[8pt]
P_n(x;\delta)&=&\dsp{\tfrac12 e^{-\frac12\delta^2}\sum_{j=0}^\infty p_j(\delta) I_y\left(j+\tfrac12,\tfrac12 n\right),}\\[8pt]
Q_n(x;\delta)&=&\dsp{\tfrac12 e^{-\frac12\delta^2}\sum_{j=0}^\infty q_j(\delta) I_y\left(j+1,\tfrac12 n\right),}\\[8pt]
p_j(\delta)&=&\dsp{\frac{\left(\frac12\delta^2\right)^j}{j!},\quad q_j(\delta)=\frac{\delta}{\sqrt2}\frac{\left(\frac12\delta^2\right)^j}{\Gamma\left(j+\frac32\right)},
\quad y=\frac{x^2}{n+x^2}.}
\end{array}
\end{equation}
For $x\le 0$ we define
\begin{equation}\label{eq:intro02}
F_n(x;\delta)=1-F_n(-x;-\delta).
\end{equation}

In the first line of \eqref{eq:intro01}  $\erfc(x)$ is the complementary error function defined by
 \begin{equation}\label{eq:intro03}
\erfc(x)=\frac{2}{\sqrt{\pi}}\int_x^\infty e^{-t^2}\,dt=2\Phi\left(-x\sqrt{2}\right),\quad \Phi(x)=\frac{1}{\sqrt{2\pi}}\int_{-\infty}^x e^{-\frac12 t^2}\,dt.
\end{equation}
The function $\Phi(x)$ is called  the normal distribution and   $I_y(a,b)$ is the incomplete beta function defined by 
\begin{equation}\label{eq:intro04}
I_y(a,b)=\frac{1}{B(a,b)}\int_0^y t^{a-1}(1-t)^{b-1}\,dt, \quad B(a,b)=\frac{\Gamma(a)\Gamma(b)}{\Gamma(a+b)}.
\end{equation}

The parameter $\delta$ is the noncentrality parameter. We assume that  $n>0$, not necessarily an integer in this paper; $n$  denotes the degrees of freedom. In the literature the definition of $F_n(x;\delta)$ in the first line of \eqref{eq:intro01} is also given with 
$\frac12\erfc\left(\delta/\sqrt2\right)$ replaced by $\Phi(-\delta)$.

When $\delta=0$, the function $F_n(x;\delta)$ reduces to Student's $t$ distribution $F_n(x)$, which we discussed in our recent paper \cite{Gil:2022:NST}. The definition in \eqref{eq:intro01} corresponds with the one in \cite{Witkovsky:2013:NOC}. In \cite[Eqn.~(5)]{Posten:1994:ANA} the following form (with different notation of parameters) is given
\begin{equation}\label{eq:intro05}
\begin{array}{@{}r@{\;}c@{\;}l@{}}
F_n(x;\delta)&=&1-G_n(x;\delta),\\[8pt]
G_n(x;\delta)&=&\dsp{\tfrac12 e^{-\frac12\delta^2}\sum_{j=0}^\infty \frac{(\delta/\sqrt2)^j}{\Gamma\left(\frac12 j+1\right)}I_{1-y}\left(\tfrac12 n, \tfrac12+\tfrac12j\right),}
\end{array}
\end{equation}
with $y$ as in \eqref{eq:intro01}. Hence, $F_n(x;\delta)$ and $G_n(x;\delta)$ are complementary functions, and in numerical computations it is convenient, to avoid numerical cancellation when one of the two functions is needed, to compute first the {\em primary} function, that is,  $\min\{F_n(x;\delta),G_n(x;\delta)\}$. This is also important for $x<0$ where $F_n(x;\delta)$ may be very small; see Figure~\ref{fig:fig01}, right. In that case the relation in \eqref{eq:intro02} should be written as $F_n(x;\delta)=G_n(-x;-\delta)$, $x<0$.

\begin{figure}[tb]
\vspace*{0.0cm}
\begin{center}
\begin{minipage}{6cm}
        \includegraphics[width=6cm]{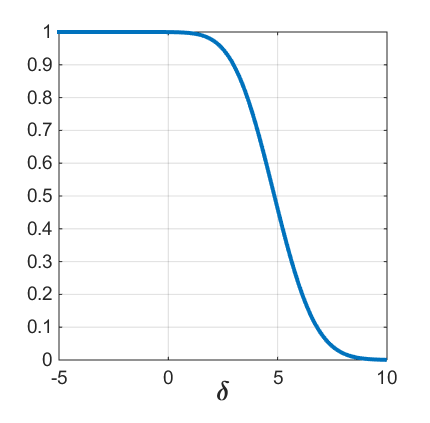} 
\end{minipage}
\hspace*{1.5cm}
\begin{minipage}{6cm}
        \includegraphics[width=5.8cm]{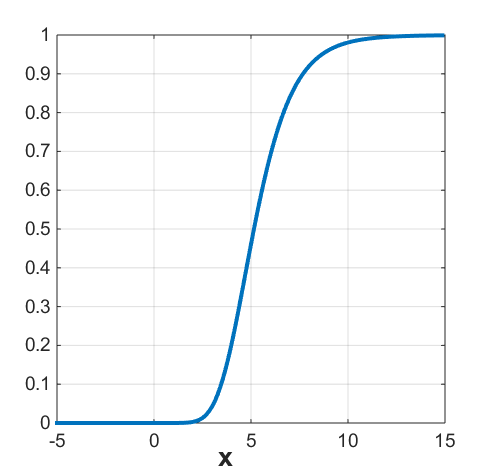} 
  \end{minipage}
\end{center}
\caption{Graphs of $F_n(x;\delta)$ as a function of the parameters $\delta$ and $x$.
{\bf Left:} $ x= 5$, $n= 10$, $-5\le \delta\le 10$.
{\bf Right:}  $-5\le x\le 15$, $n= 10$, $\delta= 5$.
}
\label{fig:fig01}
\end{figure}

In the graphs of Figure~\ref{fig:fig01} we see that the function values are about $\frac12$ when $x=\delta$. We call $x$ (or $\delta$) the {\em transition value}  of $F_n(x;\delta)$ considered as a function of $\delta$ (or of $x$).  When we have derived asymptotic approximations, we will see in different ways that the transition does indeed take place when $x=\delta$.

The representation of $F_n(x;\delta)$ in \eqref{eq:intro05} can be transformed into those in \eqref{eq:intro01} by using the complementary relation of the incomplete beta function
\begin{equation}\label{eq:intro06}
I_y(a,b)=1-I_{1-y}(b,a),
\end{equation}
and the Kummer function, or confluent hypergeometric function, defined by
\begin{equation}\label{eq:intro07}
\CHF{a}{b}{z}=\sum_{k=0}^\infty \frac{(a)_n}{(b)_n}\frac{z^n}{n!},\quad (a)_n=\frac{\Gamma(a+n)}{\Gamma(a)},
\end{equation}
with the special case
\begin{equation}\label{eq:intro08}
\CHF{1}{\frac32}{z^2}=\frac{\sqrt{\pi}}{2z} e^{z^2}\erf\,z.
\end{equation}

\begin{remark}\label{rem:rem01}
In \cite[Eqn.~(31.17)]{Johnson:1995:CUD} the form \eqref{eq:intro01} is given without the first term $\frac12\erfc\left(\delta/\sqrt2\right)$.
The series expansion of $2P_n(x;\delta)$ that follows from \eqref{eq:intro01} is given in  \cite[Page~455]{Amos:1964:RBD} as one of the many definitions of the noncentral $t$-distribution that do not correspond with our definition in \eqref{eq:intro01}. 
\end{remark}

\begin{remark}\label{rem:rem02}
The function $P_n(x;\delta)$ introduced in \eqref{eq:intro01} is a special case of the noncentral  beta distribution that we considered in a recent paper \cite{Gil:2019:NCB} in  the form
\begin{equation}\label{eq:intro09}
B_{p,q}(x,y)=e^{-x/2} \displaystyle\sum_{j=0}^{\infty} \Frac{1}{j!}\left(\Frac{x}{2}\right)^j I_y (p+j,q),\quad 0<y<1.
\end{equation}
We have
\begin{equation}\label{eq:intro10}
P_n(x;\delta)=\tfrac12B_{\frac12,\frac12n}\left(\delta^2,y\right),\quad y=\frac{x^2}{n+x^2}.
\end{equation}
In the previous paper we have  derived recurrence relations of  $B_{p,q}(x,y)$ with respect to $p$ and to $q$, which cannot be used for $P_n(x;\delta)$ with respect to $n$, because $y$ in \eqref{eq:intro10} depends on $n$ as well.
\end{remark}

In the first sections we give new integral representations  for $F_n(x;\delta)$ and for the auxiliary functions $P_n(x;\delta)$ and $Q_n(x;\delta)$. In later sections, using these integral representations, we give new asymptotic expansions for large values of the noncentrality parameter $\delta$ but also for large values of the degrees  of freedom parameter $n$. In several cases we accept more than one large parameter. These results are in terms of elementary functions, but also in terms of the complementary error function and the incomplete gamma function. A number of numerical tests demonstrate the performance of the asymptotic approximations.

\newpage

\section{Real integral representations}\label{sec:realint}

We have the following lemma.
\begin{lemma}\label{lem:lem01}
Let $y=x^2/(n+x^2)$, then
\begin{equation}\label{eq:realint01}
\begin{array}{@{}r@{\;}c@{\;}l@{}}
P_n(x;\delta)&=&\dsp{\tfrac12 \frac{e^{-\frac12\delta^2}}{B(\frac12,\frac12n)}
\int_0^yt^{-\frac12}(1-t)^{\frac12n-1}\CHF{\frac12n+\frac12}{\frac12}{\tfrac12\delta^2t}\,dt}\\[8pt]
&=&\frac12-\dsp{\tfrac12 \frac{e^{-\frac12\delta^2}}{B(\frac12,\frac12n)}
\int_0^{1-y}t^{\frac12n-1}(1-t)^{-\frac12}\CHF{\frac12n+\frac12}{\frac12}{\tfrac12\delta^2(1-t)}\,dt,}\\[8pt]
Q_n(x;\delta)&=&\dsp{\tfrac12 \frac{\delta n\,e^{-\frac12\delta^2}}{\sqrt{2\pi}}\int_0^y(1-t)^{\frac12n-1}\CHF{\frac12n+1}{\frac32}{\tfrac12\delta^2t}\,dt}\\[8pt]
&=&\frac12\erf\left(\delta/\sqrt2\right)-\dsp{\frac{\delta n\,e^{-\frac12\delta^2}}{2\sqrt{2\pi}}\int_0^{1-y}t^{\frac12n-1}\CHF{\frac12n+1}{\frac32}{\tfrac12\delta^2(1-t)}\,dt.}
\end{array}
\end{equation}
\end{lemma}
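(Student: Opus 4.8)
The plan is to start from the defining series \eqref{eq:intro01} for $P_n(x;\delta)$ and $Q_n(x;\delta)$, insert the integral representation \eqref{eq:intro04} of the incomplete beta function for each term, and interchange sum and integral. For $P_n$ we write
$$
I_y\!\left(j+\tfrac12,\tfrac12 n\right)=\frac{1}{B(j+\frac12,\frac12 n)}\int_0^y t^{j-\frac12}(1-t)^{\frac12 n-1}\,dt,
$$
and combine $p_j(\delta)=(\tfrac12\delta^2)^j/j!$ with the Beta factor; using $B(j+\tfrac12,\tfrac12 n)=\Gamma(j+\tfrac12)\Gamma(\tfrac12 n)/\Gamma(j+\tfrac12 n+\tfrac12)$ and the Pochhammer identities $\Gamma(j+\tfrac12 n+\tfrac12)/\Gamma(\tfrac12 n+\tfrac12)=(\tfrac12 n+\tfrac12)_j$ and $\Gamma(j+\tfrac12)/\Gamma(\tfrac12)=(\tfrac12)_j$, the factor in front of $t^j$ becomes $(\tfrac12 n+\tfrac12)_j\,(\tfrac12\delta^2)^j/\big((\tfrac12)_j\,j!\big)$ times the $j$-independent constant $1/B(\tfrac12,\tfrac12 n)$. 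Summing over $j$ then reproduces exactly the Kummer series \eqref{eq:intro07} with parameters $a=\tfrac12 n+\tfrac12$, $b=\tfrac12$, argument $\tfrac12\delta^2 t$, which gives the first line of \eqref{eq:realint01}. The same manipulation applied to $Q_n$, now with $q_j(\delta)=\tfrac{\delta}{\sqrt2}(\tfrac12\delta^2)^j/\Gamma(j+\tfrac32)$ and $I_y(j+1,\tfrac12 n)$, produces a ${}_1F_1$ with numerator parameter $\tfrac12 n+1$ and denominator $\tfrac32$, argument $\tfrac12\delta^2 t$; collecting the $j$-independent constants (using $\Gamma(\tfrac32)=\tfrac12\sqrt\pi$ and $B(1,\tfrac12 n)=1/(\tfrac12 n)$) yields the prefactor $\tfrac12\delta n\,e^{-\frac12\delta^2}/\sqrt{2\pi}$ in the third line.

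For the two "complementary" forms (the second and fourth lines), the idea is to use $I_y(a,b)=1-I_{1-y}(b,a)$ from \eqref{eq:intro06} inside the original series, substitute $t\mapsto 1-t$ in the resulting integral over $[0,1-y]$, and again interchange sum and integral. The leftover constant terms coming from the "$1-$" are $\tfrac12 e^{-\frac12\delta^2}\sum_j p_j(\delta)=\tfrac12$ for $P_n$ (since $\sum_j (\tfrac12\delta^2)^j/j!=e^{\frac12\delta^2}$), and for $Q_n$ the analogous sum is $\tfrac12 e^{-\frac12\delta^2}\sum_j q_j(\delta)=\tfrac{\delta}{2\sqrt2}e^{-\frac12\delta^2}\CHF{1}{\frac32}{\tfrac12\delta^2}=\tfrac12\erf(\delta/\sqrt2)$ by the special case \eqref{eq:intro08} with $z=\delta/\sqrt2$. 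This accounts for the $\tfrac12$ and $\tfrac12\erf(\delta/\sqrt2)$ terms in \eqref{eq:realint01}. Alternatively one can derive the second and fourth lines directly from the first and third by splitting $\int_0^y=\int_0^1-\int_y^1$ and evaluating the full integral $\int_0^1$ in closed form; the identity \eqref{eq:intro08} and the standard beta-integral evaluation of ${}_2F_1$ at unit argument supply the closed forms $\tfrac12$ and $\tfrac12\erf(\delta/\sqrt2)$, which is perhaps the cleaner route and sidesteps a second term-by-term interchange.

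The only genuine obstacle is justifying the interchange of summation and integration. Since $x\ge 0$ we have $0\le y<1$, so on $[0,y]$ the factor $(1-t)^{\frac12 n-1}$ is bounded (and $t^{-\frac12}$ is integrable), and the Kummer series for $\CHF{a}{b}{z}$ with these parameters has positive terms and converges uniformly for $z$ in the bounded interval $[0,\tfrac12\delta^2 y]$; hence dominated convergence (or Tonelli, all terms being nonnegative) applies and the interchange is legitimate. For the complementary forms the same bound works on $[0,1-y]$, noting $t^{\frac12 n-1}$ is integrable near $0$ for $n>0$. Modulo this routine check, the proof is a direct computation assembling the constants.
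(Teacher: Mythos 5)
Your proposal is correct and follows essentially the same route as the paper: insert the beta-integral representation \eqref{eq:intro04} into the series \eqref{eq:intro01}, interchange sum and integral to recognize the Kummer series \eqref{eq:intro07} (with the same Pochhammer bookkeeping for the prefactors), and obtain the complementary forms by splitting off the full integral over $[0,1]$, whose closed-form values $\tfrac12$ and $\tfrac12\erf(\delta/\sqrt2)$ you compute correctly via \eqref{eq:intro08}. Your explicit justification of the interchange by nonnegativity is a welcome addition that the paper leaves implicit.
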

\begin{proof}
The proof follows from \eqref{eq:intro01} by using  the integral representation \eqref{eq:intro04} and the series expansion \eqref{eq:intro07}. 
For the  integrals with intervals of integration $[0,1-y]$ we first change the integrals over $[0,y]$ by writing $[0,y]=[0,1]\setminus[y,1]$ and then we use the change of variable $t\to1-t$ for the integral over $[y,1]$. The complete integral over $[0,1]$ becomes for the first case, with $z=\frac12\delta^2$,
\begin{equation}\label{eq:realint02}
\begin{array}{@{}r@{\;}c@{\;}l@{}}
&&\dsp{\tfrac12 \frac{e^{z}}{B(\frac12,\frac12n)}
\int_0^1t^{-\frac12}(1-t)^{\frac12n-1}\CHF{\frac12n+\frac12}{\frac12}{zt}\,dt}\\[8pt]
&=&\dsp{\tfrac12 \frac{e^{-z}}{B(\frac12,\frac12n)}  
\sum_{j=0}^\infty\frac{z^j}{j!}\frac{\left(\frac12n+\frac12\right)_j}{\left(\frac12\right)_j}
\int_0^1 t^{j-\frac12}(1-t)^{\frac12n-1}}\,dt\\[8pt]
&=&\dsp{\tfrac12 \frac{e^{-z}}{B(\frac12,\frac12n)}  \frac{\Gamma\left(\frac12n\right) \Gamma\left(\frac12\right)}{\Gamma\left(\frac12n+\frac12\right)}
\sum_{j=0}^\infty\frac{z^j}{j!}=\tfrac12.
}\\[8pt]
\end{array}
\end{equation}
The second case follows similarly by using  \eqref{eq:intro08}.
\eoproof
\end{proof}
The integral of $P_n(x;\delta)$  in \eqref{eq:realint01} also corresponds with a more general integral  of the noncentral beta distribution; see  \cite[Eqn.~(1.6)]{Gil:2019:NCB}. The Kummer functions in these integrals are related with parabolic cylinder functions, see \cite[Section~12.7(iv)]{Temme:2010:PCF}.

A known integral representation is given in the following lemma.
\begin{lemma}\label{lem:lem02}
Let $\delta\in\RR$, $x\ge0$ and $n>0$, then
\begin{equation}\label{eq:realint03}
F_n(x;\delta)=A_n \int_0^\infty
\erfc\left((\delta-xt)/\sqrt{2}\right) e^{-\frac12nt^2}t^{n-1}\,dt,\quad  A_n=\frac{(n/2)^{n/2}}{\Gamma(n/2)}.
\end{equation}
\end{lemma}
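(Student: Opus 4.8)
I would verify \eqref{eq:realint03} directly against the definition \eqref{eq:intro01}. The first step is to peel off the leading term: by the fundamental theorem of calculus in an auxiliary variable $\xi$,
\[
\erfc\bigl((\delta-xt)/\sqrt2\bigr)=\erfc\bigl(\delta/\sqrt2\bigr)+\sqrt{2/\pi}\int_0^{xt}e^{-\frac12(\delta-\xi)^2}\,d\xi ,
\]
and since $A_n\int_0^\infty t^{n-1}e^{-\frac12 nt^2}\,dt=\tfrac12$ (a Gamma integral together with the definition of $A_n$), the part of the right-hand side of \eqref{eq:realint03} arising from $\erfc(\delta/\sqrt2)$ equals $\tfrac12\erfc(\delta/\sqrt2)$, matching the first term of $F_n(x;\delta)$ in \eqref{eq:intro01}. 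It then remains to show that
\[
R:=A_n\sqrt{2/\pi}\int_0^\infty t^{n-1}e^{-\frac12 nt^2}\Bigl(\int_0^{xt}e^{-\frac12(\delta-\xi)^2}\,d\xi\Bigr)dt=P_n(x;\delta)+Q_n(x;\delta);
\]
it is enough to do this for $x>0$, since for $x=0$ both sides of \eqref{eq:realint03} reduce to $\tfrac12\erfc(\delta/\sqrt2)$.

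For $x>0$ I would substitute $\xi=xts$ in the inner integral, write $e^{-\frac12(\delta-xts)^2}=e^{-\frac12\delta^2}e^{\delta xts}e^{-\frac12 x^2s^2t^2}$, expand $e^{\delta xts}=\sum_{k\ge0}(\delta xts)^k/k!$, and interchange the summation with the two integrations (legitimate by absolute convergence, the factor $e^{-\frac12 nt^2}$ dominating). The $t$-integration is then the elementary Gamma integral $\int_0^\infty t^{n+k}e^{-\frac12(n+x^2s^2)t^2}\,dt=\tfrac12\bigl(2/(n+x^2s^2)\bigr)^{(n+k+1)/2}\Gamma\bigl(\tfrac{n+k+1}2\bigr)$, and the remaining integral $\int_0^1 s^k\bigl(1+x^2s^2/n\bigr)^{-(n+k+1)/2}\,ds$ is converted to an incomplete beta function by the substitution $u=x^2s^2/(n+x^2s^2)$, chosen precisely so that the upper limit becomes $y=x^2/(n+x^2)$. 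Under it the exponent of $1-u$ collapses to $\tfrac12 n-1$, giving a constant times $I_y\bigl(\tfrac{k+1}2,\tfrac n2\bigr)$, exactly the beta function appearing in \eqref{eq:intro01}.

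Collecting constants, the factor $(n/2)^{n/2}$ in $A_n$ cancels against the powers of $2/n$ produced by the two integrations, and $\Gamma(n/2)$ cancels against $B\bigl(\tfrac{k+1}2,\tfrac n2\bigr)=\Gamma(\tfrac{k+1}2)\Gamma(\tfrac n2)/\Gamma(\tfrac{n+k+1}2)$, leaving
\[
R=\frac{e^{-\frac12\delta^2}}{\sqrt\pi}\sum_{k=0}^\infty\frac{2^{k/2-1}\delta^k}{k!}\,\Gamma\Bigl(\tfrac{k+1}2\Bigr)I_y\Bigl(\tfrac{k+1}2,\tfrac n2\Bigr).
\]
Separating even indices $k=2j$ from odd indices $k=2j+1$ and using $\Gamma(j+\tfrac12)=(2j)!\sqrt\pi/(4^jj!)$ and $\Gamma(j+\tfrac32)=(2j+1)!\sqrt\pi/(2\cdot4^jj!)$, the even part becomes $\tfrac12 e^{-\frac12\delta^2}\sum_j p_j(\delta)I_y(j+\tfrac12,\tfrac n2)=P_n(x;\delta)$ and the odd part becomes $\tfrac12 e^{-\frac12\delta^2}\sum_j q_j(\delta)I_y(j+1,\tfrac n2)=Q_n(x;\delta)$, which proves the lemma.

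The steps are all elementary; the only point requiring care is the constant bookkeeping in the last two paragraphs (powers of $2$, $n$, $x$ and the Gamma factors), which is routine but error-prone. I would also note the reason \eqref{eq:realint03} is called ``known'': it is immediate from the distributional definition of the noncentral $t$, namely $T=(Z+\delta)/\sqrt{V/n}$ with $Z\sim N(0,1)$ and $V\sim\chi^2_n$ independent. Indeed $F_n(x;\delta)=\mathbb E\bigl[\Phi\bigl(x\sqrt{V/n}-\delta\bigr)\bigr]$; writing this expectation as an integral against the density $2A_n t^{n-1}e^{-\frac12 nt^2}$ of $\sqrt{V/n}$ on $(0,\infty)$ and using $\Phi(u)=\tfrac12\erfc(-u/\sqrt2)$ from \eqref{eq:intro03} gives \eqref{eq:realint03} at once. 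The analytic computation sketched above is the translation of this fact that is consistent with the series definition \eqref{eq:intro01}.
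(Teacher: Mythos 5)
Your proof is correct --- I checked the constant bookkeeping and it does close up: the $s$-substitution $u=x^2s^2/(n+x^2s^2)$ produces exactly $n^{-n/2}x^{-k-1}B(\tfrac{k+1}2,\tfrac n2)I_y(\tfrac{k+1}2,\tfrac n2)/2$, the powers of $2$ and $n$ collapse to $2^{k/2-1}/\sqrt\pi$ as you state, and the even/odd split reproduces $p_j$ and $q_j$ of \eqref{eq:intro01} via the duplication-formula identities for $\Gamma(j+\tfrac12)$ and $\Gamma(j+\tfrac32)$. However, your route is genuinely different from the paper's. The paper does not verify \eqref{eq:realint03} against the defining series at all: it quotes the representation \eqref{eq:realint04} from the literature (Witkovsk\'y's integral involving the incomplete gamma function $Q(\tfrac12 n,\tfrac12 ns^2/x^2)$, itself attributed to Owen), performs a single integration by parts using \eqref{eq:realint06} and \eqref{eq:realint08}, and rescales $s=xt$. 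That argument is much shorter, but within the paper the lemma ultimately rests on the cited formula \eqref{eq:realint04}, which is never itself reconciled with the series definition \eqref{eq:intro01}. Your direct verification buys self-containedness: it proves that the integral \eqref{eq:realint03} and the series \eqref{eq:intro01} define the same function, which is logically the stronger statement the paper needs (and which your closing probabilistic remark explains conceptually). The price is a longer computation whose only delicate points --- the interchange of sum and integral, which your domination by $e^{|\delta|xt-\frac12 nt^2}$ handles, and the Gamma-factor arithmetic --- you have dealt with correctly.
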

\begin{proof}This integral is given, with different notation, in \cite[Section~13]{Owen:1968:PNT} and it also follows from the
 the integral representation derived in  \cite[Eqn.~(10)]{Witkovsky:2013:NOC}:
\begin{equation}\label{eq:realint04}
F_n(x;\delta)=\tfrac12\erfc\left(\delta/\sqrt{2}\right)+\frac{1}{\sqrt{2\pi}}\int_0^\infty e^{-\frac12(s-\delta)^2}Q\left(\tfrac12n,\tfrac12ns^2/x^2\right)\,ds.
\end{equation}
The function $Q(a,z)$ is  the normalised incomplete gamma functions, which together with the complementary function $P(a,z)=1-Q(a,z)$ is defined by
\begin{equation}\label{eq:realint05}
P(a,z)=\frac{1}{\Gamma(a)}\int_0^z t^{a-1} e^{-t}\,dt, \quad Q(a,z)=\frac{1}{\Gamma(a)}\int_z^\infty t^{a-1} e^{-t}\,dt,
 \end{equation}
with $\Re a >0$ for the function $P(a,z)$. Because
\begin{equation}\label{eq:realint06}
\frac{1}{\sqrt{2\pi}} e^{-\frac12(s-\delta)^2}=-\tfrac12\frac{d}{ds}\erfc\left((s-\delta)/\sqrt{2}\right),
 \end{equation}
we can integrate \eqref{eq:realint04} by parts, and obtain the representation
\begin{equation}\label{eq:realint07}
F_n(x;\delta)=\left(\frac{n}{2x^2}\right)^{n/2}\frac{1}{\Gamma(n/2)}
\int_0^\infty \erfc\left((\delta-s)/\sqrt{2}\right) s^{n-1} e^{-ns^2/(2x^2)}\,ds.
\end{equation}
We have used the relations
\begin{equation}\label{eq:realint08}
\erfc(z)=2-\erfc(-z),\quad Q(a,0)=1, \quad  \frac{d}{dz}Q(a,z)=-\frac{z^{a-1}e^{-z}}{\Gamma(a)},
\end{equation}
and
\begin{equation}\label{eq:realint09}
\left(\frac{n}{2x^2}\right)^{n/2}\frac{1}{\Gamma(n/2)}
\int_0^\infty s^{n-1} e^{-ns^2/(2x^2)}\,ds=A_n \int_0^\infty
e^{-\frac12nt^2}t^{n-1}\,dt=\tfrac12.
\end{equation}
Substituting in \eqref{eq:realint07} $s=xt$ gives the integral in \eqref{eq:realint03}.
\eoproof
\end{proof}

\begin{corollary}\label{cor:cor01}
We have the alternative representations, again for $x\ge0$,
\begin{equation}\label{eq:realint10}
\begin{array}{@{}r@{\;}c@{\;}l@{}}
F_n(x;\delta)&=&\dsp{1-A_n e^{-\frac12n}\int_0^\infty
\erfc\left((xt-\delta)/\sqrt{2}\right) e^{-\frac12n\phi(t)}\frac{dt}{t},}\\[8pt]
&=&\dsp{\tfrac12-A_ne^{-\frac12n}\int_0^\infty
\erf\left((\delta-xt)/\sqrt{2}\right) e^{-\frac12n\phi(t)}\frac{dt}{t},}
\end{array}
\end{equation}
where
\begin{equation}\label{eq:realint11}
\phi(t)=t^2-\ln(t^2)-1.
\end{equation}
The first relation corresponds with \eqref{eq:intro02}.
\end{corollary}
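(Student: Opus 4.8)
The plan is to start from the integral representation \eqref{eq:realint03} of Lemma~\ref{lem:lem02} and perform an elementary manipulation of the factor $t^{n-1}e^{-\frac12 n t^2}$. Observe that
$e^{-\frac12 n t^2}t^{n-1}=e^{-\frac12 n}\,t^{-1}\,e^{-\frac12 n(t^2-1)}\,t^{n}=e^{-\frac12 n}\,e^{-\frac12 n\phi(t)}/t$,
since $t^{n}=e^{n\ln t}=e^{\frac12 n\ln(t^2)}$ and hence $t^2-1-\ln(t^2)=\phi(t)$ as defined in \eqref{eq:realint11}. This rewrites \eqref{eq:realint03} as
$F_n(x;\delta)=A_n e^{-\frac12 n}\int_0^\infty \erfc\bigl((\delta-xt)/\sqrt2\bigr)e^{-\frac12 n\phi(t)}\,dt/t$.

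Next I would use the first identity in \eqref{eq:realint08}, namely $\erfc(z)=2-\erfc(-z)$, with $z=(\delta-xt)/\sqrt2$, so that $\erfc\bigl((\delta-xt)/\sqrt2\bigr)=2-\erfc\bigl((xt-\delta)/\sqrt2\bigr)$. Splitting the integral gives a term $2A_n e^{-\frac12 n}\int_0^\infty e^{-\frac12 n\phi(t)}\,dt/t$ minus the integral with $\erfc\bigl((xt-\delta)/\sqrt2\bigr)$. The constant term equals $1$: indeed, undoing the substitution, $A_n e^{-\frac12 n}\int_0^\infty e^{-\frac12 n\phi(t)}\,dt/t = A_n\int_0^\infty e^{-\frac12 n t^2}t^{n-1}\,dt=\tfrac12$ by the normalization \eqref{eq:realint09}. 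This yields the first line of \eqref{eq:realint10}. For the second line I would instead write $\erfc\bigl((\delta-xt)/\sqrt2\bigr)=1-\erf\bigl((\delta-xt)/\sqrt2\bigr)$, since $\erfc=1-\erf$, and the constant term now contributes $A_n e^{-\frac12 n}\int_0^\infty e^{-\frac12 n\phi(t)}\,dt/t=\tfrac12$, giving the stated form. Finally, the remark that the first relation corresponds with \eqref{eq:intro02} follows because $\erfc\bigl((xt-\delta)/\sqrt2\bigr)=\erfc\bigl((xt-(-(-\delta)))/\sqrt2\bigr)$ matches the integrand of \eqref{eq:realint03} with $x\to -x$ (equivalently $\delta\to-\delta$ after $t\to t$), so the second integral in \eqref{eq:realint10} reproduces $F_n(-x;-\delta)$ up to the sign conventions.

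There is essentially no obstacle here: the corollary is a bookkeeping consequence of Lemma~\ref{lem:lem02} together with the algebraic rewriting $e^{-\frac12 n t^2}t^{n-1}=e^{-\frac12 n}e^{-\frac12 n\phi(t)}/t$ and the two trivial error-function identities $\erfc(z)=2-\erfc(-z)$ and $\erfc(z)=1-\erf(z)$. The only point requiring a line of care is verifying that the detached constant integral equals $\tfrac12$ (resp.\ that twice it equals $1$), which is exactly \eqref{eq:realint09}; one must simply note that the change of variables used to introduce $\phi$ is reversible and maps $(0,\infty)$ onto $(0,\infty)$, so no boundary terms or Jacobian subtleties arise. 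I would present the proof in three or four lines.
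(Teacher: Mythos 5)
Your proposal is correct and follows essentially the same route as the paper: the paper's own proof is the one-line observation that the result follows from $\erfc(z)=2-\erfc(-z)=1-\erf(z)$ together with the normalization \eqref{eq:realint09}, which is exactly the argument you spell out (with the additional, correct, bookkeeping that $e^{-\frac12 nt^2}t^{n-1}=e^{-\frac12 n}e^{-\frac12 n\phi(t)}/t$). Nothing is missing.
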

\begin{proof} This follows from the  relation for $\erfc(z)=2-\erfc(-z)=1-\erf(z)$, and from \eqref{eq:realint09} 
\eoproof
\end{proof}

\begin{corollary}\label{cor:cor02}
Let $y=x^2/(n+x^2)$. We have in terms of parabolic cylinder functions
\begin{equation}\label{eq:realint12}
\begin{array}{@{}r@{\;}c@{\;}l@{}}
\dsp{\frac{\partial}{\partial x}F_n(x;\delta)}&=&\dsp{\sqrt{\frac{2}{\pi}}\,A_n\Gamma(n+1) 
\frac{e^{-\frac14\delta^2(2-y)}}{(n+x^2)^{\frac12n+\frac12}}     
U\left(n+\tfrac12,-\delta\sqrt{y}\right),}\\[8pt]
\dsp{\frac{\partial}{\partial \delta}F_n(x;\delta)}&=&\dsp{-\sqrt{\frac{2}{\pi}}\,A_n \Gamma(n) 
\frac{e^{-\frac14\delta^2(2-y)}}{(n+x^2)^{\frac12n}}     
U\left(n-\tfrac12,-\delta\sqrt{y}\right).}\\[8pt]
\end{array}
\end{equation}
\end{corollary}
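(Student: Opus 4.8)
The plan is to obtain \eqref{eq:realint12} by differentiating the integral representation \eqref{eq:realint03} of Lemma~\ref{lem:lem02} under the integral sign and then recognising the resulting integrals as parabolic cylinder functions $U(a,z)$.

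\smallskip\noindent\textbf{Step 1: differentiate under the integral.} Near any fixed pair $(x,\delta)$ with $x\ge0$ the integrand in \eqref{eq:realint03}, as well as its partial derivatives with respect to $x$ and to $\delta$, is dominated by a constant multiple of $e^{-\frac12 nt^2}t^{n-1}$, because $\erfc$ and its derivative are bounded; hence differentiation under the integral sign is justified. Using $\frac{d}{dw}\erfc(w)=-\frac{2}{\sqrt{\pi}}e^{-w^2}$ with $w=(\delta-xt)/\sqrt2$, so that $\partial_x w=-t/\sqrt2$ and $\partial_\delta w=1/\sqrt2$, one gets
\[
\frac{\partial}{\partial x}F_n(x;\delta)=\sqrt{2/\pi}\,A_n\int_0^\infty t^{n}e^{-\frac12(\delta-xt)^2-\frac12 nt^2}\,dt,\qquad
\frac{\partial}{\partial\delta}F_n(x;\delta)=-\sqrt{2/\pi}\,A_n\int_0^\infty t^{n-1}e^{-\frac12(\delta-xt)^2-\frac12 nt^2}\,dt.
\]

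\smallskip\noindent\textbf{Step 2: normalise the quadratic exponent and identify $U$.} Expanding, $-\frac12(\delta-xt)^2-\frac12 nt^2=-\frac12\delta^2+\delta x t-\frac12(n+x^2)t^2$. The substitution $s=t\sqrt{n+x^2}$, together with the identity $x/\sqrt{n+x^2}=\sqrt{y}$ (valid since $x\ge0$), turns both integrals into $\int_0^\infty s^{\mu}e^{-\frac12 s^2+\delta\sqrt{y}\,s}\,ds$ multiplied by a power of $n+x^2$, with $\mu=n$ for $\partial_x F_n$ and $\mu=n-1$ for $\partial_\delta F_n$. This is precisely the integral occurring in the standard representation of the parabolic cylinder function (see \cite{Temme:2010:PCF}),
\[
\int_0^\infty s^{a-\frac12}e^{-\frac12 s^2-zs}\,ds=\Gamma\bigl(a+\tfrac12\bigr)e^{\frac14 z^2}U(a,z),\qquad \Re a>-\tfrac12,
\]
evaluated at $z=-\delta\sqrt{y}$ and $a=n+\tfrac12$ (respectively $a=n-\tfrac12$, which is admissible because $n>0$), yielding $U\bigl(n+\tfrac12,-\delta\sqrt{y}\bigr)$, respectively $U\bigl(n-\tfrac12,-\delta\sqrt{y}\bigr)$, together with a factor $e^{\frac14\delta^2 y}$.

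\smallskip\noindent\textbf{Step 3: collect constants.} The prefactor $e^{-\frac12\delta^2}$ extracted from the exponent multiplies the factor $e^{\frac14 z^2}=e^{\frac14\delta^2 y}$ produced by the parabolic cylinder integral to give $e^{-\frac14\delta^2(2-y)}$; the Jacobian of $s=t\sqrt{n+x^2}$ supplies $(n+x^2)^{-\frac12 n-\frac12}$ in the $x$-case and $(n+x^2)^{-\frac12 n}$ in the $\delta$-case; and the Gamma factor is $\Gamma(n+1)$, respectively $\Gamma(n)$. Reinserting the $\sqrt{2/\pi}\,A_n$ prefactor reproduces exactly the two formulas in \eqref{eq:realint12}. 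I expect the only points needing care to be the justification of differentiating under the integral sign and the bookkeeping of the exponential, power, and $\Gamma$-factors; the passage to $U(a,z)$ itself is immediate once the exponent has been brought to the form $-\frac12 s^2-zs$.
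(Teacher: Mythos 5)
Your proof is correct and follows essentially the same route as the paper, which simply cites the integral representation \eqref{eq:realint03}, the derivative of $\erfc$ from \eqref{eq:intro03}, and the parabolic cylinder integral \eqref{eq:realint13}; you have merely filled in the differentiation-under-the-integral justification and the bookkeeping, both of which check out (including the identification $a=n\pm\tfrac12$ and the factor $e^{-\frac12\delta^2}e^{\frac14\delta^2 y}=e^{-\frac14\delta^2(2-y)}$).
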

\begin{proof} The proof follows from \eqref{eq:intro03},  the integral in \eqref{eq:realint03} and the integral representation of the parabolic cylinder function (see  \cite[Section~12.5(i)]{Temme:2010:PCF})
\begin{equation}\label{eq:realint13}
U(a,z)=\frac{e^{-\frac14z^2}}{\Gamma\left(a+\frac12\right)}\int_0^\infty t^{a-\frac12}e^{-\frac12t^2-zt}\,dt, \quad \Re a>-\tfrac12.
\end{equation}
\eoproof

\end{proof}
The derivatives show clearly the monotonicity of $F_n(x;\delta)$ for $x\ge0$ and $\delta\in\RR$.

\begin{figure}[tb]
\vspace*{0.8cm}
\begin{center}
\begin{minipage}{6cm}
        \includegraphics[width=6cm]{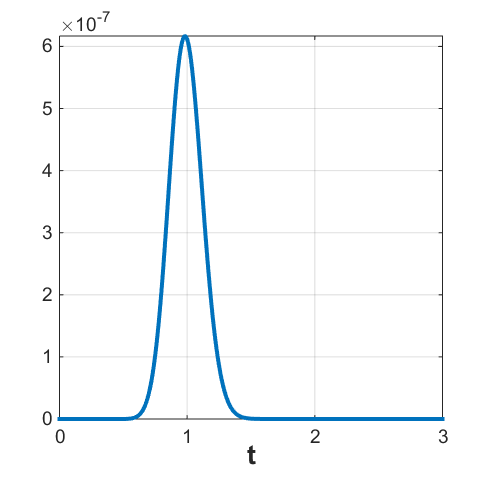} 
\end{minipage}
\hspace*{1.5cm}
\begin{minipage}{6cm}
        \includegraphics[width=6.2cm]{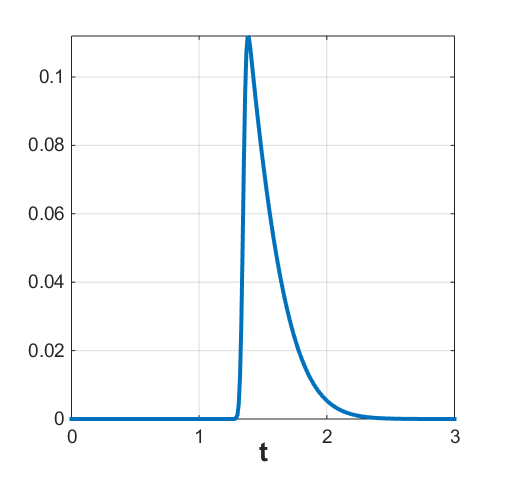} 
  \end{minipage}
\end{center}
\caption{Graphs of the integrands of the integral in \eqref{eq:realint03}.
{\bf Left:} $ x= 20$, $n= 30$, $\delta= 10$.
{\bf Right:}  $ x= 52$, $n= 4$, $\delta= 70$.
}
\label{fig:fig02}
\end{figure}

 The complementary error function in the integral in \eqref{eq:realint03} becomes almost equal to 2 for large positive $t$ and  when $t\ll\delta/x$ this function will become exponentially small.  This gives is a change of behaviour when $t=\delta/x$.  In addition,  the exponential part of the integrand $\exp(-\frac12n \phi(t))$ has its maximal behaviour when $t=1$. Consequently, these special behaviours of two parts of the integrand give a sudden change in behaviour of $F_n(x;\delta)$ when $\delta$ passes the value $x$, in particular when $\delta$ and $x$ are large. In that case $F_n(x;\delta)$ passes the value $\frac12$. 
 
In Figure~\ref{fig:fig02} we give two graphs of the integrands of the integral in \eqref{eq:realint03} for two sets of the parameters. The left graph is nicely bell shaped, the right one, with a larger value of $\delta$, shows a very steep left side near $t=\delta/x\doteq1.35$, where the complementary error function rapidly changes values from $0$ to $2$. Numerical quadrature experiences this as a discontinuity.

When we have algorithms for computing $F_n(x;\delta)$ as well as for  the complementary function $G_n(x;\delta)$ defined in\eqref{eq:intro05}, it is important to compute first the {\em primary} function  $\min\{F_n(x;\delta), G_n(x;\delta)\}$ (see below \eqref{eq:intro05}). This means that if $\delta \ge x$ we should calculate $F_n(x;\delta)$ first, otherwise it is better to calculate $G_n(x;\delta)$. The other function follows from the complementary relation $F_n(x;\delta)+G_n(x;\delta)=1$. Observe that the first line in \eqref{eq:realint10} gives an integral of the  function $G_n(x;\delta)$.

\section{Contour integral representations}\label{sec:contour}
We use the following integral representation of the incomplete beta function (see \cite[\S8.17(iii)]{Paris:2010:INC}):
\begin{equation}\label{eq:contour01}
I_y(p,q)=\frac{y^p(1-y)^q}{2\pi i}\int_{c-i\infty}^{c+i\infty} \frac{dt}{t^p(1-t)^q(t-y)},\quad 0<y<c<1,
\end{equation}
with $p>0$, $q>0$.  The branches of $s^{-a}$ and $(1-s)^{-b}$  are continuous on the path and assume their principal values when $t=c$.

The function $P_n(x;\delta) $ in  \eqref{eq:intro01} becomes
\begin{equation}\label{eq:contour02}
\begin{array}{@{}r@{\;}c@{\;}l@{}}
P_n(x;\delta)&=&\dsp{
\tfrac12 e^{-\frac12\delta^2}\frac{\sqrt{y}\,(1-y)^{\frac12n}}{2\pi i}\int_{c-i\infty}^{c+i\infty}  \frac{e^{\frac{1}{2t}\delta^2y}}{\sqrt{t}\, (1-t)^{\frac12n}(t-y)}\, dt}\\[8pt]
&=&\dsp{\tfrac12-
\tfrac12 e^{-\frac12\delta^2}\frac{\sqrt{y}\,(1-y)^{\frac12n}}{2\pi i}\int_{d-i\infty}^{d+i\infty}  \frac{e^{\frac{1}{2t}\delta^2y}}{\sqrt{t}\, (1-t)^{\frac12n}(y-t)}\, dt,}
\end{array}
\end{equation}
where $0<y<c<1$ and $0<d<y<1$. The argument of the exponential function is a bounded quantity on the path of integration, hence, interchanging summation and integration can be justified because of uniform convergence. The second follows from shifting in the first integral the path to the right, across the pole at $t=y$.

For the function $Q_n(x;\delta)$ we obtain, using  \eqref{eq:intro01}, \eqref{eq:intro07},  \eqref{eq:intro08} and  \eqref{eq:contour01},
\begin{equation}\label{eq:contour03}
\begin{array}{@{}r@{\;}c@{\;}l@{}}
Q_n(x;\delta)&=&\dsp{\tfrac12 \frac{\delta}{\sqrt2} \frac{e^{-\frac12\delta^2}}{\Gamma\left(\frac32\right)}
\frac{y(1-y)^{\frac12n}}{2\pi i}\int_{c-i\infty}^{c+i\infty}  \frac{{}_1F_1\left(1,\frac32;\frac{1}{2t}\delta^2y\right)}{t (1-t)^{\frac12n}(t-y)}\,dt,}\\[8pt]
&=&\dsp{\tfrac12 e^{-\frac12\delta^2}\frac{\sqrt{y}\,(1-y)^{\frac12n}}{2\pi i}
\int_{c-i\infty}^{c+i\infty}  \frac{e^{\frac{1}{2t}\delta^2y}\erf\left(\delta\sqrt{\frac{y}{2t}}\,\right) }{\sqrt{t}\, (1-t)^{\frac12n}(t-y)}\, dt.}
\end{array}
\end{equation}
When we use $\erf\,z=1-\erfc\,z$, we obtain
\begin{equation}\label{eq:contour04}
Q_n(x;\delta)=P_n(x;\delta)-\tfrac12 e^{-\frac12\delta^2}\frac{\sqrt{y}\,(1-y)^{\frac12n}}{2\pi i}\int_{c-i\infty}^{c+i\infty}  \frac{e^{\frac{1}{2t}\delta^2y}\erfc\left(\delta\sqrt{\frac{y}{2t}}\,\right)}{\sqrt{t}\, (1-t)^{\frac12n}(t-y)}\, dt.
\end{equation}
We conclude:
\begin{lemma}\label{lem:lem03}
We have derived the following representation
\begin{equation}\label{eq:contour05}
F_n(x;\delta)=2P_n(x;\delta)+R_n(x;\delta), 
\end{equation}
where
\begin{equation}\label{eq:contour06}
\begin{array}{@{}r@{\;}c@{\;}l@{}}
R_n(x;\delta)&=&\tfrac12\erfc\left(\delta/\sqrt2\right)+Q_n(x;\delta)-P_n(x;\delta)\\[8pt]
&=&\tfrac12\erfc\left(\delta/\sqrt2\right)\ -\\[8pt]
&&\dsp{\tfrac12 e^{-\frac12\delta^2}\frac{\sqrt{y}\,(1-y)^{\frac12n}}{2\pi i}\int_{c-i\infty}^{c+i\infty}  \frac{e^{\frac{1}{2t}\delta^2y}\erfc\left(\delta\sqrt{\frac{y}{2t}}\,\right)}{\sqrt{t}\, (1-t)^{\frac12n}(t-y)}\, dt}\\[8pt]
&=&\dsp{\tfrac12 e^{-\frac12\delta^2}\frac{\sqrt{y}\,(1-y)^{\frac12n}}{2\pi i}\int_{d-i\infty}^{d+i\infty}  \frac{e^{\frac{1}{2t}\delta^2y}\erfc\left(\delta\sqrt{\frac{y}{2t}}\,\right)}{\sqrt{t}\, (1-t)^{\frac12n}(y-t)}\, dt,}
\end{array}
\end{equation}
where  $0<y<c<1$ and $0<d<y<1$.
\end{lemma}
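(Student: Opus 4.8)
The plan is to peel off the three expressions for $R_n(x;\delta)$ in the order they are displayed, each from material already in hand; the first two are bookkeeping and the third rests on a single residue.

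First I would read off \eqref{eq:contour05} together with the first line of \eqref{eq:contour06} straight from the definition \eqref{eq:intro01}: since $F_n(x;\delta)=\tfrac12\erfc(\delta/\sqrt2)+P_n(x;\delta)+Q_n(x;\delta)$, writing $F_n(x;\delta)=2P_n(x;\delta)+R_n(x;\delta)$ forces $R_n(x;\delta)=\tfrac12\erfc(\delta/\sqrt2)+Q_n(x;\delta)-P_n(x;\delta)$. Inserting \eqref{eq:contour04} into this identity cancels the two copies of $P_n(x;\delta)$ and leaves precisely the second line of \eqref{eq:contour06}, with the path along $\Re t=c$ and $0<y<c<1$.

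The remaining work is to move the path from $\Re t=c$ (with $c>y$) to $\Re t=d$ (with $d<y$). In the closed strip $d\le\Re t\le c$ the integrand
\[
g(t)=\frac{e^{\delta^2y/(2t)}\,\erfc\!\left(\delta\sqrt{y/(2t)}\,\right)}{\sqrt{t}\,(1-t)^{\frac12n}(t-y)}
\]
is meromorphic with a single simple pole, at $t=y$: the branch points of $\sqrt{t}$ and $(1-t)^{\frac12n}$ and the essential singularity of $e^{\delta^2y/(2t)}$ all lie at $t=0$ or $t=1$, outside the strip (this is where $d>0$ is used), while $\erfc$ is entire. On the horizontal edges $\Im t=\pm T$ the numerator tends to $1$ and the denominator grows like $|t|^{\frac12(n+3)}$, so those segments drop out as $T\to\infty$ and the residue theorem gives $\int_{(c)}g\,dt=\int_{(d)}g\,dt+2\pi i\,{\rm Res}_{t=y}\,g$. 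Because ${\rm Res}_{t=y}\,g=e^{\frac12\delta^2}\erfc(\delta/\sqrt2)/\bigl(\sqrt{y}\,(1-y)^{\frac12n}\bigr)$, the prefactor $\tfrac12 e^{-\frac12\delta^2}\sqrt{y}\,(1-y)^{\frac12n}/(2\pi i)$ multiplied against that residue yields exactly $\tfrac12\erfc(\delta/\sqrt2)$, cancelling the leading term in the second line of \eqref{eq:contour06}; rewriting $t-y=-(y-t)$ then produces the third line.

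The only delicate point is the justification of the contour shift: one must confirm that $g$ is holomorphic in the strip apart from the simple pole at $t=y$ and that the two horizontal connecting segments are asymptotically negligible — both granted by the growth bound just noted. Once that is secured, the rest is the immediate residue arithmetic above.
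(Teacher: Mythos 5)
Your proof is correct and follows essentially the same route as the paper: the first two lines are the bookkeeping consequences of \eqref{eq:intro01} and \eqref{eq:contour04}, and the third line comes from shifting the contour across the simple pole at $t=y$, exactly the mechanism the paper uses for $P_n(x;\delta)$ in \eqref{eq:contour02}. Your explicit verification of the residue value and of the decay on the horizontal connecting segments supplies details the paper leaves implicit, but the argument is the same.
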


We find,  using $\erfc(-z)=2-\erfc\,z$ and because $P_n(x;\delta)$ is even with respect to $\delta$ and $Q_n(x;\delta)$ is odd:
\begin{equation}\label{eq:contour07}
F_n(x;-\delta)=1-\tfrac12\erfc\left(\delta/\sqrt{2}\right)+P_n(x;\delta)-Q_n(x;\delta),
\end{equation}
and we conclude
\begin{corollary}\label{cor:cor03}
With $R_n(x;\delta)$ given in \eqref{eq:contour06}, the following representation holds
\begin{equation}\label{eq:contour08}
F_n(x;-\delta)=1-R_n(x;\delta),\quad G_n(x;-\delta)=R_n(x;\delta),
\end{equation}
where $ G_n(x;\delta)$ is the complementary function defined in  \eqref{eq:intro05}. 
\end{corollary}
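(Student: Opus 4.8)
The plan is to derive Corollary~\ref{cor:cor03} directly from the two identities that precede it, namely the representation \eqref{eq:contour07} for $F_n(x;-\delta)$ and the definition \eqref{eq:contour06} of $R_n(x;\delta)$. The first line of \eqref{eq:contour06} tells us that
\[
R_n(x;\delta)=\tfrac12\erfc\left(\delta/\sqrt2\right)+Q_n(x;\delta)-P_n(x;\delta).
\]
Comparing this with \eqref{eq:contour07}, which reads
\[
F_n(x;-\delta)=1-\tfrac12\erfc\left(\delta/\sqrt{2}\right)+P_n(x;\delta)-Q_n(x;\delta),
\]
one sees that the $\delta$-dependent part of $F_n(x;-\delta)$ is exactly the negative of $R_n(x;\delta)$, so that $F_n(x;-\delta)=1-R_n(x;\delta)$. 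This is a one-line algebraic manipulation; the only thing to be careful about is that \eqref{eq:contour07} itself must be justified, and for that I would spell out that $P_n(x;\delta)$ is even in $\delta$ (visible from the series in \eqref{eq:intro01}, since $p_j(\delta)=(\tfrac12\delta^2)^j/j!$ depends on $\delta^2$ only) while $Q_n(x;\delta)$ is odd (the prefactor $\delta/\sqrt2$ in $q_j(\delta)$ supplies a single odd power and the rest is again a function of $\delta^2$), and that $\tfrac12\erfc(-\delta/\sqrt2)=1-\tfrac12\erfc(\delta/\sqrt2)$ from \eqref{eq:realint08}. Substituting these three facts into the defining formula \eqref{eq:intro01} for $F_n(x;-\delta)$ gives \eqref{eq:contour07}.

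For the second assertion, $G_n(x;-\delta)=R_n(x;\delta)$, I would invoke the complementary relation $F_n(x;\delta)+G_n(x;\delta)=1$, which is recorded just below \eqref{eq:intro05} and follows from comparing \eqref{eq:intro01} with \eqref{eq:intro05} via \eqref{eq:intro06}, \eqref{eq:intro07}, \eqref{eq:intro08}. Replacing $\delta$ by $-\delta$ in that relation and using the first part of the corollary, $F_n(x;-\delta)=1-R_n(x;\delta)$, immediately yields $G_n(x;-\delta)=1-F_n(x;-\delta)=R_n(x;\delta)$.

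There is really no deep obstacle here; the statement is a bookkeeping corollary whose entire content is that the three pieces introduced along the way fit together consistently. The one place where a little care is needed — and the place I would devote the most words to in the written proof — is making the even/odd parity of $P_n$ and $Q_n$ explicit, since everything hinges on those symmetries combined with the reflection formula for $\erfc$. Everything else is substitution. I would close by remarking, as the corollary already hints, that the contour-integral formula for $R_n(x;\delta)$ in the last two lines of \eqref{eq:contour06} then serves simultaneously as a representation of $G_n(x;-\delta)$, which is precisely the primary function one wants to compute when $\delta<0$ and $F_n(x;\delta)$ is tiny, consistent with the discussion following \eqref{eq:intro05}.
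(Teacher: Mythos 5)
Your proof is correct and follows essentially the same route as the paper: establish \eqref{eq:contour07} from the evenness of $P_n$, the oddness of $Q_n$, and $\erfc(-z)=2-\erfc(z)$, then compare with the first line of \eqref{eq:contour06} to get $F_n(x;-\delta)=1-R_n(x;\delta)$, with the second identity following from $F_n+G_n=1$. Your explicit justification of the parity claims is a welcome bit of extra care that the paper leaves implicit.
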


When we compare this with the representation in \eqref{eq:intro05}, we see that
\begin{equation}\label{eq:contour09}
R_n(x;\delta)=\tfrac12 e^{-\frac12\delta^2}\sum_{j=0}^\infty \frac{(-\delta/\sqrt2)^j}{\Gamma\left(\frac12 j+1\right)}I_{1-y}\left(\tfrac12 n, \tfrac12+\tfrac12j\right). 
\end{equation}
This also follows by using in \eqref{eq:contour06} and the expansion \cite[\S7.2(i), \S6(i)]{
Temme:2010:ERF}
\begin{equation}\label{eq:contour10}
w(z)=e^{-z^2}\erfc(-iz),\quad w(z)=\sum_{k=0}^\infty \frac{(iz)^k}{\Gamma\left(\frac12k+1\right)},
\end{equation}
and the integral representation of the incomplete beta function in \eqref{eq:contour01}.

The next theorem will show that the second  integral in  \eqref{eq:contour06} can be written as a simple Laplace integral. 
\begin{theorem}\label{them:them01}
The function $R_n(x;\delta)$ has the following representation
\begin{equation}\label{eq:contour11}
R_n(x;\delta)=\frac{e^{-\frac12\delta^2}\sqrt{y}\,(1-y)^{\frac12n}}{2\pi}\int_0^\infty
e^{-\frac12\delta^2y t}\frac{t^{\frac12n-\frac12}}{(t+1)^{\frac12n}(1+yt)}\,dt.
\end{equation}
\end{theorem}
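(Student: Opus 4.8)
The plan is to start from the last contour integral for $R_n(x;\delta)$ in \eqref{eq:contour06}, taken over the vertical line $\Re t=d$ with $0<d<y<1$, and to collapse that path to the left onto $(-\infty,0]$, which is the common branch cut of $t^{-1/2}$ and of $\sqrt{y/(2t)}$ (the argument of the error function). I take $\delta>0$ for definiteness; the case $\delta\le 0$ is then recovered from the complementary relation \eqref{eq:contour08}. The pole at $t=y$ and the branch point at $t=1$ lie to the right of the line, hence are not crossed; on a large semicircle in the left half-plane the factor $e^{\delta^2 y/(2t)}\erfc(\delta\sqrt{y/(2t)})$ tends to $1$ while the rational part is $O(|t|^{-n/2-3/2})$, so that arc contributes nothing; and for $\delta>0$ the small circle about $t=0$ is negligible because $e^{\delta^2 y/(2t)}\erfc(\delta\sqrt{y/(2t)})\to 0$ there. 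Thus $R_n(x;\delta)$ reduces to the integral of the jump of the integrand across $(-\infty,0]$.

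To evaluate that jump I would set $t=-\tau\pm i0$, $\tau>0$. With principal branches one has $t^{-1/2}=\mp i\tau^{-1/2}$ and $\sqrt{y/(2t)}=\mp i\sqrt{y/(2\tau)}$, whereas $e^{\delta^2 y/(2t)}=e^{-\delta^2 y/(2\tau)}$, $(1-t)^{n/2}=(1+\tau)^{n/2}$ and $y-t=y+\tau$ are continuous. Writing $v=\delta\sqrt{y/(2\tau)}$, the discontinuity of the integrand is
\[
\frac{e^{-\delta^2 y/(2\tau)}\bigl(\erfc(-iv)+\erfc(iv)\bigr)}{i\,\tau^{1/2}(1+\tau)^{n/2}(y+\tau)}
=\frac{2\,e^{-\delta^2 y/(2\tau)}}{i\,\tau^{1/2}(1+\tau)^{n/2}(y+\tau)},
\]
where the key simplification is the identity $\erfc(w)+\erfc(-w)=2$, which removes the error function entirely. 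Keeping track of the orientation of the collapsed contour, this gives
\[
R_n(x;\delta)=\frac{e^{-\frac12\delta^2}\sqrt{y}\,(1-y)^{\frac12 n}}{2\pi}\int_0^\infty
\frac{e^{-\delta^2 y/(2\tau)}}{\tau^{1/2}(1+\tau)^{\frac12 n}(y+\tau)}\,d\tau.
\]
Finally the substitution $\tau=1/t$ converts the growing kernel $e^{-\delta^2 y/(2\tau)}$ into the decaying Laplace exponential $e^{-\frac12\delta^2 y t}$; collecting the powers of $t$ produced by $\tau^{-1/2}$, $(1+\tau)^{-n/2}$, $(y+\tau)^{-1}$ and $d\tau=-t^{-2}\,dt$ leaves exactly the factor $t^{\frac12 n-\frac12}/\bigl((t+1)^{\frac12 n}(1+yt)\bigr)$, which is \eqref{eq:contour11}.

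The delicate step is the justification of the collapse: one must verify the $O(|t|^{-n/2-3/2})$ decay at infinity in the left half-plane, the vanishing of the small arc around the essential singularity at $t=0$ (which forces $\delta>0$, since $e^{\delta^2 y/(2t)}\erfc(\delta\sqrt{y/(2t)})\to 0$ as $t\to 0$ with $\Re t\ge 0$ precisely when $\delta>0$), and the boundary values of the multivalued factors listed above. An alternative that avoids the branch cut is to insert the Laplace-type formula $e^{z^2}\erfc(z)=\tfrac{2z}{\pi}\int_0^\infty e^{-u^2}/(z^2+u^2)\,du$ (valid for $\Re z>0$) with $z=\delta\sqrt{y/(2t)}$, interchange the integrations, evaluate the resulting rational $t$-integral by closing to the left (its only pole being at $t=-\delta^2 y/(2u^2)$), and set $u^2=\tfrac12\delta^2 y t$; this produces the same Laplace integral.
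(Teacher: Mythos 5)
Your proposal is correct and follows essentially the same route as the paper: collapse the vertical contour in the last line of \eqref{eq:contour06} onto the branch cut $(-\infty,0]$, use $\erfc(iw)+\erfc(-iw)=2$ to eliminate the error function from the jump, and substitute $\tau=1/t$ to obtain the Laplace integral \eqref{eq:contour11}. You in fact supply details the paper leaves implicit (the $O(|t|^{-n/2-3/2})$ decay on the large arc, the vanishing of the small arc at the essential singularity via $e^{z^2}\erfc(z)\sim 1/(z\sqrt{\pi})$, and the role of $\delta>0$), so apart from tidying the sign bookkeeping in the displayed jump, nothing is missing.
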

\begin{proof}
The proof follows from shifting the contour of the second integral in \eqref{eq:contour06} to the left,  along the imaginary axis. The contour can be further modified by bending it along the negative axis, taking into account the multivalued function $\sqrt{t}$.  Convergence at the origin follows from the asymptotic behaviour of the complementary error function (see  \cite[7.12.1]{Temme:2010:ERF} for more details)
\begin{equation}\label{eq:contour12}
\erfc\,z=\frac{e^{-z^2}}{z\sqrt\pi}\left(1+\bigO\left(z^{-2}\right)\right),\quad z\to \infty.
\end{equation}
The relation $\erfc(iw)+\erfc(-iw)=2$ and a few extra steps finish the proof.
\eoproof
\end{proof}

In \cite[\S5.2]{Gil:2019:NCB} we have used a finite  loop integral of the noncentral beta distribution, which function is  related with $P_n(x;\delta)$, see \eqref{eq:intro10}. In the present case we have the following theorem.
\begin{theorem}\label{them:them02}
The functions $P_n(x;\delta)$, $Q_n(x;\delta)$ and $R_n(x;\delta)$  have the following integral representations
\begin{equation}\label{eq:contour13}
\begin{array}{@{}r@{\;}c@{\;}l@{}}
P_n(x;\delta)&=&\dsp{\tfrac12e^{-\frac12\delta^2}\frac{\sqrt{y}\,(1-y)^{\frac12n}}{2\pi i}\int_0^{(1+)}e^{\frac12\delta^2 y\tau}\,\frac{\tau^{\frac12n-\frac12}}{(\tau-1)^{\frac12n}(1-y\tau)}\,d\tau,}\\[8pt]
Q_n(x;\delta)&=&\dsp{\tfrac12e^{-\frac12\delta^2}\frac{\sqrt{y}\,(1-y)^{\frac12n}}{2\pi i}\int_0^{(1+)}e^{\frac12\delta^2 y\tau}\,\frac{\tau^{\frac12n-\frac12}\erf\left(\delta\sqrt{\frac{1}{2}y\tau}\,\right)}{(\tau-1)^{\frac12n}(1-y\tau)}\,d\tau,}\\[8pt]
R_n(x;\delta)&=&\tfrac12\erfc\left(\delta/\sqrt2\right)\ -\\[8pt]
&&\dsp{\tfrac12e^{-\frac12\delta^2}\frac{\sqrt{y}\,(1-y)^{\frac12n}}{2\pi i}\int_0^{(1+)}e^{\frac12\delta^2 y\tau}\,\frac{\tau^{\frac12n-\frac12}\erfc\left(\delta\sqrt{\frac{1}{2}y\tau}\,\right)}{(\tau-1)^{\frac12n}(1-y\tau)}\,d\tau.}
\end{array}
\end{equation}
The finite contours  start at the origin, encircle the point $\tau=1$ anti-clockwise, and return to the origin. The multivalued factors assume their principal values, $(\tau-1)$ is positive for $\tau>1$.  The pole at $\tau=1/y$ is outside the contours, which means,  the contours cut the positive real axis between $1$ and $1/y$.
\end{theorem}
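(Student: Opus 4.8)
The plan is to derive all three identities from the vertical–line representations already at hand by one and the same change of variables, the inversion $t=1/\tau$. I would take $P_n(x;\delta)$ from the first line of \eqref{eq:contour02}, $Q_n(x;\delta)$ from the second line of \eqref{eq:contour03}, and $R_n(x;\delta)$ from the representation in \eqref{eq:contour06} whose contour is $\Re t=c$: in each case $y<c<1$, so that the pole $t=y$ and the singularities at $t=0$ lie to the left of the contour while the branch point $t=1$ lies to its right, and $e^{\delta^2y/(2t)}$ stays bounded on the line. Under $t=1/\tau$ one has $dt=-\tau^{-2}\,d\tau$, $e^{\delta^2y/(2t)}=e^{\frac12\delta^2y\tau}$ and $\sqrt{y/(2t)}=\sqrt{\tfrac12 y\tau}$, so the $\erf$ and $\erfc$ factors are merely transported, and a short computation gives
\[
\frac{dt}{t^{1/2}(1-t)^{n/2}(t-y)}=-\frac{\tau^{n/2-1/2}}{(\tau-1)^{n/2}(1-y\tau)}\,d\tau ,
\]
the powers of $\tau$ collecting as $\tfrac12+\tfrac12 n+1-2=\tfrac12 n-\tfrac12$.

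Next I would track the contour. Writing $t=c+is$, the image of the line $\Re t=c$ under $\tau=1/t$ is the circle $\bigl|\tau-\tfrac1{2c}\bigr|=\tfrac1{2c}$; it runs through the origin (the common image of the two ends $t=c\pm i\infty$) and meets the positive real axis once more, at $\tau=1/c$. Because $0<c<1$ this circle surrounds $\tau=1$, while the former pole has moved to $\tau=1/y>1/c$ and so stays outside, and the crossing point $\tau=1/c$ lies between $1$ and $1/y$. Cutting the circle open at the origin exhibits it as a loop starting and ending at $\tau=0$ and winding once about $\tau=1$; inheriting the upward orientation of the line it is described clockwise, but the minus sign in $dt=-\tau^{-2}\,d\tau$ reverses the orientation, so the two sign changes cancel and the loop is traversed anti-clockwise --- precisely the contour $\int_0^{(1+)}$ of the statement, with no leftover sign. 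Integrability at the endpoint $\tau=0$ is clear, the integrand being $\bigO(\tau^{n/2-1/2})$ there (with an extra $\sqrt\tau$ from $\erf$ in the $Q_n$ case), hence integrable for all $n>0$; the original line integrals converge because their integrands decay like $|t|^{-n/2-3/2}$.

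It remains to fix the branches. On $\Re t=c$ the factors $t^{-1/2}$ and $(1-t)^{-n/2}$ carry their principal values and the line crosses neither cut $(-\infty,0]$ nor $[1,\infty)$; under $t=1/\tau$ they become $\tau^{n/2-1/2}$ and $(\tau-1)^{-n/2}$. Testing the single real point $t=c\leftrightarrow\tau=1/c>1$ --- which is both where \eqref{eq:contour01} prescribes principal values and where the loop cuts the positive axis --- every factor is a positive real number, which pins the determination to the principal one with $(\tau-1)>0$ for $\tau>1$; the loop meets the cut $[1,\infty)$ only at $\tau=1/c$, consistent with its winding number $1$ about $\tau=1$. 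The same one–point check fixes the branch of $\sqrt{\tfrac12 y\tau}$ in $\erf$ and $\erfc$ to be positive on $(0,\infty)$, and in the $R_n$ case the additive constant $\tfrac12\erfc(\delta/\sqrt2)$ is left untouched. Restoring the common prefactor $\tfrac12 e^{-\frac12\delta^2}\sqrt{y}\,(1-y)^{n/2}/(2\pi i)$ then produces the three displayed formulas. I expect the main obstacle to be exactly this bookkeeping: getting the orientation count right and tracking the monodromy of $(\tau-1)^{-n/2}$ across $\tau=1/c$; the remaining manipulations are routine.
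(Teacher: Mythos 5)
Your proposal is correct and follows exactly the paper's route: the paper's proof is the one-line remark that the representations ``immediately follow from \eqref{eq:contour02}, \eqref{eq:contour03} and \eqref{eq:contour06} by using the substitution $t=1/\tau$,'' and you have simply supplied the details (Jacobian, image circle $\vert\tau-\tfrac1{2c}\vert=\tfrac1{2c}$ cutting the axis at $1/c\in(1,1/y)$, orientation reversal cancelling the sign of $dt$, and the branch check at $\tau=1/c$), all of which check out.
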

\begin{proof}
These representations immediately follow from  \eqref{eq:contour02}, \eqref{eq:contour03} and \eqref{eq:contour06} by using the substitution $t=1/\tau$. \eoproof
\end{proof}

From this theorem a similar result for $F_n(x;\delta)$ easily follows. Observe that the second representation in the following theorem gives a result for the complementary function $G_n(x;\delta)$ introduced in \eqref{eq:intro05}. We have
\begin{theorem}\label{them:them03}
The noncentral $t$ distribution $F_n(x;\delta)$ defined in \eqref{eq:intro01} has the integral representations
\begin{equation}\label{eq:contour14}
\begin{array}{@{}r@{\;}c@{\;}l@{}}
F_n(x;\delta)&=&\dsp{\tfrac12\erfc\left(\delta/\sqrt2\right)\ +}\\[8pt]
&&\dsp{\tfrac12e^{-\frac12\delta^2}\frac{\sqrt{y}\,(1-y)^{\frac12n}}{2\pi i}
\int_0^{(1+)}e^{\frac12\delta^2 y\tau}\frac{\tau^{\frac12n-\frac12}
\erfc\left(-\delta\sqrt{\frac{1}{2}y\tau}\,\right)}{(\tau-1)^{\frac12n}(1-y\tau)}\,d\tau,}\\[8pt]
&=&\dsp{1-\tfrac12e^{-\frac12\delta^2}\frac{\sqrt{y}\,(1-y)^{\frac12n}}{2\pi i}
\int_0^{(1+,1/y+)}e^{\frac12\delta^2 y\tau}\frac{\tau^{\frac12n-\frac12}
\erfc\left(-\delta\sqrt{\frac{1}{2}y\tau}\,\right)}{(\tau-1)^{\frac12n}(y\tau-1)}\,d\tau,}
\end{array}
\end{equation}
where the first contour is as in \eqref{eq:contour13} and the second one starts  at the origin, encircles the point $\tau=1$ and $\tau=1/y$ anti-clockwise, and returns to the origin. 
\end{theorem}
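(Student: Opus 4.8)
The plan is to read off both representations directly from Theorem~\ref{them:them02}, using only the defining relation $F_n(x;\delta)=\tfrac12\erfc(\delta/\sqrt2)+P_n(x;\delta)+Q_n(x;\delta)$ from \eqref{eq:intro01} together with the loop integrals in \eqref{eq:contour13}, plus one residue evaluation for the second identity. First I would add the $P_n$- and $Q_n$-integrals of \eqref{eq:contour13}: since both have the identical prefactor $\tfrac12e^{-\frac12\delta^2}\sqrt{y}\,(1-y)^{\frac12n}/(2\pi i)$ and identical kernel $e^{\frac12\delta^2 y\tau}\tau^{\frac12n-\frac12}/\big((\tau-1)^{\frac12n}(1-y\tau)\big)$, the only change is that the numerator factor becomes $1+\erf\!\big(\delta\sqrt{\tfrac12 y\tau}\big)$. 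Using $\erf z=1-\erfc z$ and $\erfc(-z)=2-\erfc z$ (the relation in \eqref{eq:realint08}) gives $1+\erf z=\erfc(-z)$, so adding the $\tfrac12\erfc(\delta/\sqrt2)$ term yields exactly the first representation in \eqref{eq:contour14}, with the contour unchanged (the $(1+)$ loop cutting the real axis between $1$ and $1/y$).

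Next I would obtain the second representation by enlarging this contour so that it also encircles the simple pole at $\tau=1/y$ coming from the factor $1/(1-y\tau)$. Here I would first check that the integrand is otherwise single-valued and analytic in the region swept out: writing $\erfc\!\big(-\delta\sqrt{\tfrac12 y\tau}\big)=1+\sqrt\tau\,h(\tau)$ with $h$ entire (since $\erf$ is an odd entire function of its argument divided by that argument), the numerator $\tau^{\frac12n-\frac12}\erfc(-\delta\sqrt{\tfrac12 y\tau})$ is a combination of $\tau^{\frac12n-\frac12}$ and $\tau^{\frac12n}\,h(\tau)$, whose only branch point is $\tau=0$; with the stated convention ($(\tau-1)^{\frac12n}$ positive for $\tau>1$) the cut of $(\tau-1)^{-\frac12n}$ lies in $(-\infty,1)$, so the point $\tau=1/y>1$ sits in a region free of cuts. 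Hence $\int_0^{(1+,1/y+)}=\int_0^{(1+)}+\oint_{\tau=1/y}$.

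Then I would compute $\mathrm{Res}_{\tau=1/y}$ of the kernel $e^{\frac12\delta^2 y\tau}\tau^{\frac12n-\frac12}\erfc(-\delta\sqrt{\tfrac12 y\tau})/\big((\tau-1)^{\frac12n}(1-y\tau)\big)$. Using $1-y\tau=-y(\tau-1/y)$, the exponential evaluates to $e^{\frac12\delta^2}$, the $\erfc$-argument becomes $-\delta/\sqrt2$, and the powers of $y$ collapse via $-(n-1)/2+n/2=1/2$; multiplying by the prefactor $\tfrac12e^{-\frac12\delta^2}\sqrt{y}\,(1-y)^{\frac12n}/(2\pi i)$ the loop contribution comes out to $-\tfrac12\erfc(-\delta/\sqrt2)$. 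Combining with $\tfrac12\erfc(\delta/\sqrt2)+\tfrac12\erfc(-\delta/\sqrt2)=1$ turns the first representation into $F_n(x;\delta)=1+\tfrac12e^{-\frac12\delta^2}\frac{\sqrt{y}(1-y)^{\frac12n}}{2\pi i}\int_0^{(1+,1/y+)}(\cdots)\,d\tau$, and replacing $1/(1-y\tau)$ by $-1/(y\tau-1)$ gives precisely the second line of \eqref{eq:contour14}. The only delicate point, and the one I would write out most carefully, is the bookkeeping of the multivalued factors $\tau^{\frac12n-\frac12}$ and $(\tau-1)^{\frac12n}$ along and between the two loops—verifying that the deformation crosses no branch cut and that the residue at $\tau=1/y$ is taken with the principal branch values—everything else being the elementary simplification just sketched.
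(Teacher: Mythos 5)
Your proposal is correct and follows essentially the same route as the paper: sum the $P_n$ and $Q_n$ loop integrals of \eqref{eq:contour13}, use $1+\erf z=\erfc(-z)$ to get the first line, then pull the pole at $\tau=1/y$ inside the contour and use $\erfc(z)+\erfc(-z)=2$ to get the second. The paper states the residue step in one sentence; your explicit residue computation (giving $-\tfrac12\erfc(-\delta/\sqrt2)$) and the check that $\tau=1/y$ lies off the branch cuts are exactly the details it leaves implicit.
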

\begin{proof}
The sum $P_n(x;\delta)+Q_n(x;\delta)$ with integral representations in \eqref{eq:contour13} gives a similar integral with $1+\erf(\delta\sqrt{y\tau/2})=1-\erf(-\delta\sqrt{y\tau/2})=\erfc(-\delta\sqrt{y\tau/2})$. By using this in \eqref{eq:intro01} the first representation follows. For the second representation we modify  the contour in the first representation by taking the pole at $\tau=1/y$ inside the contour and picking up the residue. We use 
 the property $\erfc(z)+\erfc(-z)=2$ to conclude the proof. 
\eoproof
\end{proof}
 
The integral representation of $R_n(x;\delta)$ given in \eqref{eq:contour11} of  Theorem~\ref {them:them01} plays also a role in the following theorem.
\begin{theorem}\label{them:them04}
Let the function $H_n(x;\delta)$ be defined by the contour integral
\begin{equation}\label{eq:contour15}
H_n(x;\delta)=\tfrac12e^{-\frac12\delta^2}\frac{\sqrt{y}\,(1-y)^{\frac12n}}{2\pi i}\int_{-\infty}^{(1+)}
e^{\frac12\delta^2 y\tau}\,\frac{\tau^{\frac12n-\frac12}}{(\tau-1)^{\frac12n}(1-y\tau)}\,d\tau,
\end{equation}
with $y=x^2/(n+x^2)$. The contour starts at $-\infty$ with the phases of $\tau$ and $(\tau-1)$ equal to $-\pi$, encircles $\tau=1$ anti-clockwise, and terminates at $-\infty$ with the phases of $\tau$ and $(\tau-1)$ equal to $+\pi$. The contour cuts the positives axis between $1$ and $1/y$. Then,
\begin{equation}\label{eq:contour16}
H_n(x;\delta)=P_n(x;\delta)+R_n(x;\delta),\quad F_n(x;\delta)=2H_n(x;\delta)-R_n(x;\delta).
\end{equation}
 \end{theorem}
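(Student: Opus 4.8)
The plan is to realize $H_n$ as the sum of the finite-loop formula for $P_n$ already established in Theorem~\ref{them:them02} and the Laplace integral for $R_n$ of Theorem~\ref{them:them01}, by splitting the contour of \eqref{eq:contour15} along the negative real axis. The second identity in \eqref{eq:contour16} will then be a one-line consequence of Lemma~\ref{lem:lem03}.

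First I would start from the first line of \eqref{eq:contour13}, in which $P_n(x;\delta)$ is $\tfrac12 e^{-\frac12\delta^2}\sqrt{y}\,(1-y)^{\frac12n}/(2\pi i)$ times the integral of $e^{\frac12\delta^2 y\tau}\tau^{\frac12n-\frac12}(\tau-1)^{-\frac12n}(1-y\tau)^{-1}$ over a loop $\calL_1$ that leaves the origin, encircles $\tau=1$, and returns to the origin, cutting the positive axis between $1$ and $1/y$. Now extend the two tails of $\calL_1$ from $\tau=0$ outward to $\tau=-\infty$, one just below and one just above the negative real axis. Since $n>0$ the integrand is $O(\tau^{\frac12n-\frac12})$ near $\tau=0$, so the small detour around the origin drops out, and the extended contour is exactly the contour in \eqref{eq:contour15} defining $H_n$ (same phases $\mp\pi$ at $-\infty$, same crossing of the positive axis, the pole $\tau=1/y$ left outside). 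Hence
\begin{equation*}
H_n(x;\delta)=P_n(x;\delta)+\tfrac12 e^{-\frac12\delta^2}\frac{\sqrt{y}\,(1-y)^{\frac12n}}{2\pi i}\int_{\calC_0}e^{\frac12\delta^2 y\tau}\,\frac{\tau^{\frac12n-\frac12}}{(\tau-1)^{\frac12n}(1-y\tau)}\,d\tau,
\end{equation*}
where $\calC_0$ runs from $-\infty$ (phases of $\tau$ and $\tau-1$ equal to $-\pi$) to the origin and back to $-\infty$ (phases $+\pi$), i.e. it wraps only the cut $(-\infty,0]$.

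Next I would collapse $\calC_0$ onto that cut. Writing $\tau=-\sigma$ with $\sigma>0$, the exponential becomes $e^{-\frac12\delta^2 y\sigma}$ and $1-y\tau$ becomes $1+y\sigma$; the only factor that differs on the two edges is $\tau^{\frac12n-\frac12}(\tau-1)^{-\frac12n}$, which equals $i\,\sigma^{\frac12n-\frac12}(\sigma+1)^{-\frac12n}$ on the lower edge (phases $-\pi$, combined phase $e^{-i\pi(\frac12n-\frac12)+i\pi\frac12n}=e^{i\pi/2}$) and $-i$ times the same on the upper edge. Taking into account the orientation of $\calC_0$, subtracting the two edges yields an overall factor $2i$, so
\begin{equation*}
\int_{\calC_0}e^{\frac12\delta^2 y\tau}\,\frac{\tau^{\frac12n-\frac12}}{(\tau-1)^{\frac12n}(1-y\tau)}\,d\tau=2i\int_0^\infty e^{-\frac12\delta^2 y\sigma}\,\frac{\sigma^{\frac12n-\frac12}}{(\sigma+1)^{\frac12n}(1+y\sigma)}\,d\sigma,
\end{equation*}
the last integral converging at $\sigma=0$ (integrand $\sim\sigma^{\frac12n-\frac12}$, $n>0$) and at $\sigma=\infty$ (integrand $\sim\sigma^{-3/2}$ thanks to the extra factor $1+y\sigma$), while the connecting arcs at $-\infty$ give nothing because there $\Re\tau\to-\infty$. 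Substituting this back, the $2i$ cancels the $2\pi i$ and, on comparison with \eqref{eq:contour11}, the resulting expression is precisely $R_n(x;\delta)$; therefore $H_n(x;\delta)=P_n(x;\delta)+R_n(x;\delta)$. The second relation in \eqref{eq:contour16} then follows at once from \eqref{eq:contour05}: $F_n(x;\delta)=2P_n(x;\delta)+R_n(x;\delta)=2\bigl(H_n(x;\delta)-R_n(x;\delta)\bigr)+R_n(x;\delta)=2H_n(x;\delta)-R_n(x;\delta)$.

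The main obstacle I expect is the phase bookkeeping in the middle step: one must verify that the principal-value conventions of Theorems~\ref{them:them01}--\ref{them:them02} are consistent with the phases $\mp\pi$ prescribed for the contour of \eqref{eq:contour15}, that the pole $\tau=1/y$ genuinely stays outside $\calL_1$, the $H_n$-contour, and $\calC_0$ alike, and that the little arc around $\tau=0$ and the arcs at $-\infty$ truly contribute nothing. Everything after that is a routine substitution.
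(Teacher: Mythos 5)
Your proposal is correct and follows essentially the same route as the paper: the paper's proof likewise splits the contour of \eqref{eq:contour15} into the loop $\int_0^{(1+)}$ giving $P_n(x;\delta)$ via \eqref{eq:contour13} and the two edges along $(-\infty,0)$ giving $R_n(x;\delta)$ via \eqref{eq:contour11}, then invokes \eqref{eq:contour05}. Your phase computation (factor $e^{\pm i\pi/2}$ on the two edges, combining with the orientation to give $2i$ and cancel the $2\pi i$) is exactly the bookkeeping the paper leaves implicit, and it checks out.
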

 \begin{proof}
We take the parts of the contour with $\Re\tau <0$ below and above the negative axis, taking into account the phase of $\tau$ and $\tau-1$. The remaining  integral with path $\int_0^{(1+)}$  gives the representation in \eqref{eq:contour13} of  $P_n(x;\delta)$, and the integrals along $(-\infty,0)$ give together the representation of $R_n(x;\delta)$ in  \eqref{eq:contour11}. This gives the first relation in \eqref{eq:contour16}, and the second one follows from \eqref{eq:contour05}.
\eoproof
\end{proof}

\begin{remark}\label{rem:rem03}
The integral in \eqref{eq:contour11}  becomes a Kummer $U$ function when $(1+yt)$ is removed. Similarly for the first integral in \eqref{eq:contour13}  and the integral in \eqref{eq:contour15}, without $(1-y\tau)$, the integrals become Kummer $F$ functions. The relation in \eqref{eq:contour16} corresponds with the relation in \cite[Eq.~13.2.42]{Olde:2010:CHF}, and the proof of this relation for the Kummer functions can be based on the integral representations in 
\cite[Eqns.~13.4.4,  13.4.9, 13.4.13]{Olde:2010:CHF}.
\end{remark}

\section{Preliminary observations for large values of~\protectbold{\delta}}\label{sec:observe}
We will show in this section that we only need to consider the role of $P_n(x;\delta)$ in the asymptotic analysis of  $F_n(x;\delta)$ for large values of $\delta$.
We explain this in the simple example where we start with the even and odd power series of $e^x$:
\begin{equation}\label{eq:observe01}
e^x=\sum_{k=0}^\infty\frac{x^{2k}}{(2k)!}+\sum_{k=0}^\infty\frac{x^{2k+1}}{(2k+1)!}=2\cosh x+\left(\sinh x-\cosh  x\right)=2\cosh x-e^{-x}.
\end{equation}
When we want to compute  $e^x$ for large values of $x$ by using the Taylor series,  it is possible to use twice the expansion of $\cosh x$, with a relative  error term of order~$e^{-2x}$.

In the present case we have the following theorem.
\begin{theorem}\label{them:them05}
The noncentral $t$ distribution $F_n(x;\delta)$ defined in \eqref{eq:intro01} has for large values of the noncentrality parameter $\delta$ the asymptotic representation 
\begin{equation}\label{eq:observe02}
F_n(x;\delta)=2P_n(x;\delta)+\frac{e^{-\frac12\delta^2}\sqrt{y}\,(1-y)^{\frac12n}}{2\pi}\frac{\Gamma\left(\frac12n+\frac12\right)}{\zeta^{\frac12n+\frac12}}\left(1+\bigO\left(\delta^{-2}\right)\right),
\end{equation}
where $\zeta=\frac12\delta^2y$, and  $y\in(0,1)$ and $n$ are fixed.
\end{theorem}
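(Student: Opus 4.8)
The plan is to use the decomposition already established in Theorem~\ref{them:them01} together with the elementary identity in \eqref{eq:contour05}, namely $F_n(x;\delta)=2P_n(x;\delta)+R_n(x;\delta)$, so that all the asymptotic work is reduced to extracting the leading behaviour of $R_n(x;\delta)$ as $\delta\to\infty$. By \eqref{eq:contour11} we have
\begin{equation}\label{eq:plan01}
R_n(x;\delta)=\frac{e^{-\frac12\delta^2}\sqrt{y}\,(1-y)^{\frac12n}}{2\pi}\int_0^\infty
e^{-\frac12\delta^2y t}\,\frac{t^{\frac12n-\frac12}}{(t+1)^{\frac12n}(1+yt)}\,dt,
\end{equation}
and the right-hand side is a genuine Laplace integral in the large parameter $\zeta=\frac12\delta^2y$, since $y\in(0,1)$ is fixed. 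First I would write the integral in the standard form $\int_0^\infty e^{-\zeta t}g(t)\,dt$ with $g(t)=t^{\frac12n-\frac12}(t+1)^{-\frac12n}(1+yt)^{-1}$, observe that $g$ is analytic at $t=0$ and that near the origin $g(t)=t^{\frac12n-\frac12}\bigl(1+\bigO(t)\bigr)$, so the endpoint $t=0$ dominates. Watson's lemma then gives
\begin{equation}\label{eq:plan02}
\int_0^\infty e^{-\zeta t}\,\frac{t^{\frac12n-\frac12}}{(t+1)^{\frac12n}(1+yt)}\,dt
=\frac{\Gamma\left(\frac12n+\frac12\right)}{\zeta^{\frac12n+\frac12}}\left(1+\bigO\left(\zeta^{-1}\right)\right),
\end{equation}
which, after substituting $\zeta=\frac12\delta^2y$ and noting $\bigO(\zeta^{-1})=\bigO(\delta^{-2})$ for fixed $y$, is exactly the error term claimed in \eqref{eq:observe02}.

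**Next I would assemble the pieces.** Combining \eqref{eq:contour05}, \eqref{eq:plan01} and \eqref{eq:plan02} yields
\begin{equation}\label{eq:plan03}
F_n(x;\delta)=2P_n(x;\delta)+\frac{e^{-\frac12\delta^2}\sqrt{y}\,(1-y)^{\frac12n}}{2\pi}\,\frac{\Gamma\left(\frac12n+\frac12\right)}{\zeta^{\frac12n+\frac12}}\left(1+\bigO\left(\delta^{-2}\right)\right),
\end{equation}
which is the assertion. The small amount of care needed is to justify that the $\bigO$ term in Watson's lemma may be multiplied through by the prefactor $e^{-\frac12\delta^2}\sqrt{y}(1-y)^{\frac12n}/(2\pi)$ and still be absorbed into a relative error of order $\delta^{-2}$ \emph{inside} the second term of \eqref{eq:observe02}; since that prefactor does not depend on $\delta$ this is immediate, and the word ``asymptotic representation'' in the statement is understood relative to the second term, not relative to $F_n(x;\delta)$ itself (indeed $2P_n(x;\delta)$ is the dominant contribution).

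**The only real subtlety**—and the step I would dwell on—is the applicability of Watson's lemma here, i.e. that the tail of the integral in \eqref{eq:plan01} is exponentially small compared with the algebraic expansion generated at $t=0$. This holds because $g(t)=t^{\frac12n-\frac12}(t+1)^{-\frac12n}(1+yt)^{-1}$ is continuous on $[0,\infty)$, behaves like $t^{-\frac32}$ as $t\to\infty$, and the exponential weight $e^{-\zeta t}$ with $\zeta>0$ makes $\int_0^\infty e^{-\zeta t}|g(t)|\,dt$ finite and $\bigO(\zeta^{-\frac12n-\frac12})$; the standard proof of Watson's lemma (split the range at $t=1$, expand $g$ in the Taylor polynomial of degree one plus remainder near $0$, bound the remainder by $C t^{\frac12n+\frac12}$) then applies verbatim. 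One should also note that the pole of $(1+yt)^{-1}$ sits at $t=-1/y<0$, off the path, so $g$ is in fact analytic on a neighbourhood of $[0,\infty)$ and the expansion can be continued to all orders if desired; for the present first-order statement only the leading term and the $\bigO(t)$ bound on the remainder are needed. Finally, $y$ and $n$ being fixed is exactly what makes $\zeta\to\infty$ equivalent to $\delta\to\infty$ and turns $\bigO(\zeta^{-1})$ into $\bigO(\delta^{-2})$, completing the proof.
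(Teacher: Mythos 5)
Your argument is correct and is precisely the proof the paper gives: the decomposition $F_n(x;\delta)=2P_n(x;\delta)+R_n(x;\delta)$ from \eqref{eq:contour05}, the Laplace integral \eqref{eq:contour11} for $R_n(x;\delta)$, and a direct application of Watson's lemma with large parameter $\zeta=\tfrac12\delta^2y$. Your additional checks (analyticity of the amplitude at $t=0$, the pole at $t=-1/y$ lying off the contour, integrability at infinity) are sound and simply make explicit what the paper calls a ``straightforward application.''
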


\begin{proof}
The proof follows from \eqref{eq:contour05}, the representation of $R_n(x;\delta)$ in \eqref{eq:contour11} and a straightforward application of Watson's lemma; see  \cite[Chapter~2]{Temme:2015:AMI}.
\eoproof
\end{proof}

When we compute  $F_n(x;\delta)$ by using the relation in \eqref{eq:contour05} it is needed to compute $R_n(x;\delta)$. However, in the transition case $\delta\sim x$, where  $F_n(x;\delta)\sim\frac12$, $R_n(x;\delta)$ can be neglected when $\delta$ is large enough. This follows from \eqref{eq:observe02}.      More details on the asymptotic expansion of $R_n(x;\delta)$ will be given in Theorem~\ref{them:them10} of Section~\ref{sec:unrestrict}, where we shall see that there is an extra exponentially small behaviour noticeable for large values of $n$ and $\delta$.

\section{Asymptotic expansions for large \protectbold{\delta}}\label{sec:delta}
From Theorem~\ref{them:them05} it follows that we can concentrate on the asymptotic behaviour of $P_n(x;\delta)$. In the first part of this section we summarize the corresponding result derived for the noncentral beta distribution in \cite[\S6.1]{Gil:2019:NCB}. In Subsection~\ref{sec:xdelta} we give a new expansion which is not given in  our earlier paper, and which can be modified with respect to the parameters (see \eqref{eq:intro10})  to obtain a similar result for  the noncentral beta distribution.

\subsection{An expansion in terms of elementary functions} \label{sec:summncbeta}

The result of this subsection is a modification with respect to notation of a result  for the noncentral beta distribution as derived in \cite[\S6.1]{Gil:2019:NCB}.

\begin{theorem}\label{them:them06}
The function $F_n(x;\delta)$ defined in  \eqref{eq:intro01} has the asymptotic expansion
\begin{equation}\label{eq:delta01} 
F_n(x;\delta)\sim\frac{e^{-\frac12(1-y)\delta^2}\sqrt{y} (1-y)^{\frac12n} \zeta^{\frac12n-1}}{\Gamma\left(\frac12n\right)}\sum_{k=0}^\infty(-1)^k\left(1-\tfrac12n\right)_k\frac{c_k}{\zeta^k},
\end{equation}
as $ \zeta\to\infty$, where $\zeta=\frac12\delta^2y$. The coefficients $c_k$ follow from the expansion
\begin{equation}\label{eq:delta02}
\frac{(1+t)^{\frac12n-\frac12}}{1-y(t+1)}=\sum_{n=0}^\infty c_k t^k.
\end{equation}
The expansion in \eqref{eq:delta01} can be used for $n={\cal{O}}(1)$ and for $\eps \le y \le 1-\eps$, where $\eps$ is a fixed positive small number. When $n=2m$, $m=1,2,3,\dots$  the terms in  the expansion with index $k\ge m$ all vanish. 
\end{theorem}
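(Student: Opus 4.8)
The plan is to start from one of the contour integral representations of $P_n(x;\delta)$ and extract the large-$\zeta$ behaviour by a Watson-type argument, then lift the result to $F_n(x;\delta)$ via Theorem~\ref{them:them05}. Recall from Theorem~\ref{them:them05} that $F_n(x;\delta)=2P_n(x;\delta)+R_n(x;\delta)$, with $R_n(x;\delta)$ exponentially small of order $e^{-\frac12\delta^2}\zeta^{-\frac12n-\frac12}$ relative to the scale $e^{-\frac12(1-y)\delta^2}$ appearing in \eqref{eq:delta01}; since $e^{-\frac12\delta^2}=e^{-\frac12(1-y)\delta^2}e^{-\frac12 y\delta^2}=e^{-\frac12(1-y)\delta^2}e^{-\zeta}$, the term $R_n(x;\delta)$ is beyond all orders of the asymptotic series and may be discarded. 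So it suffices to expand $2P_n(x;\delta)$. I would use the first representation in \eqref{eq:contour13}:
\begin{equation}\label{eq:propPn}
P_n(x;\delta)=\tfrac12e^{-\frac12\delta^2}\frac{\sqrt{y}\,(1-y)^{\frac12n}}{2\pi i}\int_0^{(1+)}e^{\frac12\delta^2 y\tau}\,\frac{\tau^{\frac12n-\frac12}}{(\tau-1)^{\frac12n}(1-y\tau)}\,d\tau.
\end{equation}
The exponential factor $e^{\zeta\tau}$ is largest near the outer end of the loop, so the dominant contribution comes from a neighbourhood of $\tau=1$; substituting $\tau=1+t$ turns the loop around $\tau=1$ into a Hankel-type loop around $t=0$, and $e^{\zeta\tau}=e^{\zeta}e^{\zeta t}$ pulls out the exponential $e^{\zeta}e^{-\frac12\delta^2}=e^{-\frac12(1-y)\delta^2}$ that appears in \eqref{eq:delta01}.

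Concretely, after $\tau=1+t$ the integral becomes
\begin{equation}\label{eq:propShift}
P_n(x;\delta)=\tfrac12e^{-\frac12(1-y)\delta^2}\frac{\sqrt{y}\,(1-y)^{\frac12n}}{2\pi i}\int_0^{(0+)}e^{\zeta t}\,\frac{(1+t)^{\frac12n-\frac12}}{t^{\frac12n}\bigl(1-y(1+t)\bigr)}\,dt.
\end{equation}
Now I would insert the defining Taylor expansion \eqref{eq:delta02}, namely $\frac{(1+t)^{\frac12n-\frac12}}{1-y(1+t)}=\sum_{k=0}^\infty c_k t^k$, valid in a disc about $t=0$ of radius less than the distance to the nearest singularity (the pole at $t=1/y-1$ and, when $n$ is not an even integer, the branch point of $(1+t)^{\frac12n-\frac12}$ at $t=-1$). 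Integrating term by term and using the Hankel loop integral
\begin{equation}\label{eq:propHankel}
\frac{1}{2\pi i}\int_0^{(0+)}e^{\zeta t}\,t^{k-\frac12n}\,dt=\frac{\zeta^{\frac12n-k-1}}{\Gamma\left(\frac12n-k\right)},
\end{equation}
together with the reflection identity $1/\Gamma\left(\frac12n-k\right)=(-1)^k\left(1-\frac12n\right)_k/\Gamma\left(\frac12n\right)$, gives exactly the series $2P_n(x;\delta)\sim\dsp\frac{e^{-\frac12(1-y)\delta^2}\sqrt{y}(1-y)^{\frac12n}\zeta^{\frac12n-1}}{\Gamma\left(\frac12n\right)}\sum_k(-1)^k\left(1-\frac12n\right)_k c_k\zeta^{-k}$, which is the claimed expansion. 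The vanishing of the terms with $k\ge m$ when $n=2m$ is then immediate: the Pochhammer symbol $\left(1-m\right)_k$ is zero for $k\ge m$. (Equivalently, for $n=2m$ the integrand in \eqref{eq:propShift} has $t^{-m}$ and the loop integral reduces to a residue, a finite sum.)

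The technical heart — and the place where I expect the real work — is justifying that term-by-term integration of a \emph{divergent} asymptotic series against the Hankel contour actually yields an asymptotic expansion with controlled remainder. This is the standard issue in Watson's-lemma-for-loop-integrals arguments: one writes $\frac{(1+t)^{\frac12n-\frac12}}{1-y(1+t)}=\sum_{k=0}^{N-1}c_k t^k+t^N g_N(t)$ with $g_N$ analytic in a fixed neighbourhood of the relevant part of the contour and of at most polynomial growth along it, deforms the loop to a standard steepest-descent-type contour (pushing the two sides of the cut out along $\arg t=\pm\pi$ where $e^{\zeta t}$ decays), and bounds the remainder integral by $O\left(\zeta^{\frac12n-N-1}\right)$ uniformly. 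The uniformity claim for $\eps\le y\le 1-\eps$ needs the observation that as $y$ ranges over this interval the pole at $t=1/y-1$ stays bounded away from $t=0$ and from the deformed contour, so the radius of the disc in which \eqref{eq:delta02} converges — and the bounds on $g_N$ — can be taken independent of $y$; the restriction $n=\bigO(1)$ is what lets us treat $\Gamma\left(\frac12n\right)$ and the $c_k$ as $\bigO(1)$ constants rather than tracking their $n$-dependence. Since this is precisely the argument carried out in \cite[\S6.1]{Gil:2019:NCB} for $B_{p,q}(x,y)$, in the write-up I would invoke that reference after recording the notational translation \eqref{eq:intro10}, rather than repeating the estimates.
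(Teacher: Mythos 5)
Your proposal is correct and follows essentially the same route as the paper: shift the loop by $\tau=1+t$ to pull out $e^{-\frac12(1-y)\delta^2}$, insert the Taylor expansion \eqref{eq:delta02}, integrate term by term via the Hankel representation of $1/\Gamma$ (Watson's lemma for loop integrals), use the reflection identity to produce the Pochhammer symbols, and discard $R_n(x;\delta)$ as exponentially small relative to the scale of the series. The only cosmetic difference is that the paper starts from $H_n(x;\delta)$ in \eqref{eq:contour15}, whose contour already extends to $-\infty$ so that the formula \eqref{eq:delta06} applies exactly, whereas your finite loop for $P_n(x;\delta)$ (which after the shift should begin and end at $t=-1$, not $t=0$) satisfies the Hankel identity only up to exponentially small corrections --- a discrepancy that is essentially $R_n$ once more and is absorbed by the contour deformation and remainder analysis you sketch.
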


The coefficients $c_k$ defined in \eqref{eq:delta02} follow from the simple recursions
\begin{equation}\label{eq:delta03}
c_{k}=\frac{1}{1-y}\left(a_{k}+y c_{k-1}\right), \quad k\ge 1 ,\quad c_0=\frac{1}{1-y}, 
\end{equation}
where the coefficients $a_k$ follow from
\begin{equation}\label{eq:delta04}
(1+t)^{\frac12n-\frac12}=\sum_{k=0}^\infty a_k t^k,\quad a_k=\binom{\frac12n-\frac12}{k}=\frac{\frac12n+\frac12-k}{k}a_{k-1},
\end{equation}
for  $k\ge1$ with $a_0=1$.

\begin{proof}
We start with an expansion of  $H_n(x;\delta)$, and the proof is based on the representation in \eqref{eq:contour15} after the transformation $\tau=1+t$. This gives
\begin{equation}\label{eq:delta05}
H_n(x;\delta)=\tfrac12e^{-\frac12(1-y)\delta^2}\frac{\sqrt{y}\,(1-y)^{\frac12n}}{2\pi i}\int_{-\infty}^{(0+)}e^{\zeta t}t^{-\frac12n} \frac{(1+t)^{\frac12n-\frac12}}{1-y(1+t)}\,dt.
\end{equation}
We apply Watson's lemma for loop integrals  (see \cite[Page~120]{Olver:1997:ASF}) by substituting the expansion \eqref{eq:delta02}, and using the contour integral
\begin{equation}\label{eq:delta06}
\frac{1}{\Gamma(z)}=\frac{1}{2\pi i} \int_{-\infty}^{(0+)}e^t t^{-z}\,dt.
\end{equation}
We obtain
\begin{equation}\label{eq:delta07} 
H_n(x;\delta)\sim\tfrac12\frac{e^{-\frac12(1-y)\delta^2}\sqrt{y} (1-y)^{\frac12n} \zeta^{\frac12n-1}}{\Gamma\left(\frac12n\right)}\sum_{k=0}^\infty(-1)^k\left(1-\tfrac12n\right)_k\frac{c_k}{\zeta^k}.
\end{equation}

The terms  in  this expansion with index $k\ge m$ vanish when $n=2m$, $m=1,2,3,\dots$ because 
 \begin{equation}\label{eq:delta08}
\frac{1}{2\pi i}\int_{-\infty}^{(0+)} e^{\zeta t}t^{k-m} \,dt=0,\quad k=m, m+1, m+2, \ldots\, .
\end{equation}

With the expansion for $H_n(x;\delta)$ in \eqref{eq:delta07}, we use $F_n(x;\delta)\sim 2H_n(x;\delta)$  for large values of $\delta$ to obtain the expansion in \eqref{eq:delta01}. This follows from the second relation in \eqref{eq:contour16} and from Theorem~\ref{them:them05}, where we have given an asymptotic estimate of  $R_n(x;\delta)$ with exponential factor $e^{-\frac12\delta^2}$. In \eqref{eq:delta01} we see an  exponential factor $e^{-\frac12(1-y)\delta^2}$. Hence, from an asymptotic point of view, when $\delta$ is large  $R_n(x;\delta)$ can be neglected in the relation 
$F_n(x;\delta)=2H_n(x;\delta)-R_n(x;\delta)$.
\eoproof
\end{proof}

In Table~\ref{tab:table01} we give numerical values of $F_n(x;\delta)$ for $n=10.3$, $\delta=20$ and several values of $x$ by using the expansion of $F_n(x;\delta)$ given in \eqref{eq:delta01}.
Because of  \eqref{eq:contour05} we have for large $\delta$ a similar asymptotic estimate $F_n(x;\delta)\sim2P_n(x;\delta)$ as for $H_n(x;\delta)$, and we can compare the computed values with twice the series expansion of $P_n(x;\delta)$ in \eqref{eq:intro01}. The final column in the table gives the number of terms $j_{\max}$ needed in that expansion to have the relative error smaller than $10^{-16}$ compared with the computed sum. We have computed  the incomplete beta functions per term, not with recursion. We see that when $x=\delta$, the transition value, $F_n(x;\delta)$ is near $\frac12$.  As $x$ increases from 5 to the transition value 20,  with decreasing values of  $1-y=n/(n+x^2)$,  the relative accuracy becomes larger. This happens because $c_k$ becomes larger as $1-y$ becomes smaller, which follows from the recursion in \eqref{eq:delta03}. The computations are done with Maple, $Digits=16$.

\renewcommand{\arraystretch}{1.25}
\begin{table}
\caption{Computed values of $F_n(x;\delta)$ and relative errors for  $n=10.3$, $\delta=20$ and several values of $x$.  We used the expansion given in \eqref{eq:delta01} with 11  terms. 
\label{tab:table01}}
$$
\begin{array}{rcccccc}\hline
x \ & F_n(x;\delta) \ & {\rm rel.\  accuracy} \  &1-y& j_{\max} \\
\hline\\[-10pt]
5& 0.7890745035061528\times 10^{-20} & 0.20\times 10^{-13}& 0.29178 & 254\\
8 & 0.1902963697413609\times 10^{-07} & 0.40\times 10^{-12} & 0.13863 & 294\\
11 & 0.4649258368179092\times 10^{-03} & 0.12\times 10^{-09} & 0.07845 & 310\\
14 & 0.2912746016055676\times 10^{-01} & 0.11\times 10^{-07} & 0.04993& 317\\
17 & 0.1858422833307925\times 10^{-00} & 0.41\times 10^{-06} & 0.03441 & 321\\
20 & 0.4434882973203470\times 10^{-00} & 0.82\times 10^{-05} & 0.02510 & 323\\
\hline
\end{array}
$$
\end{table}
\renewcommand{\arraystretch}{1.0}

\subsection{An expansion in terms of incomplete gamma functions}\label{sec:xdelta}

Clearly, the coefficients in the expansion given in Theorem~\ref{them:them06} are not defined as $y\to 1$, which corresponds with $x^2\gg n$. To handle this, we can use the following theorem.
\begin{theorem}\label{them:them07}
The function $F_n(x;\delta)$ has the asymptotic expansions
\begin{equation}\label{eq:delta09}
F_n(x;\delta)\sim y^{\frac12n-\frac12}\sum_{k=0}^\infty \binom{\frac12n-\frac12}{k} \eta^k
Q\left(\tfrac12n-k,\eta\zeta\right),
\end{equation}
and
\begin{equation}\label{eq:delta10}
F_n(x;\delta)\sim1- y^{\frac12n-\frac12}\sum_{k=0}^\infty \binom{\frac12n-\frac12}{k} \eta^k
P\left(\tfrac12n-k,\eta\zeta\right),
\end{equation}
as $ \zeta\to\infty$, where $P(a,z)$ and $Q(a,z)$ are the incomplete gamma function ratios given in \eqref{eq:realint05}, and 
\begin{equation}\label{eq:delta11}
\zeta=\tfrac12\delta^2y,\quad  \eta=\frac{1-y}{y}=\frac{n}{x^2}.
\end{equation}

\end{theorem}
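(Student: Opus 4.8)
The plan is to mimic the proof of Theorem~\ref{them:them06}: start from the loop integral \eqref{eq:delta05} for $H_n(x;\delta)$ and use $F_n(x;\delta)=2H_n(x;\delta)-R_n(x;\delta)$ from \eqref{eq:contour16}. By Theorem~\ref{them:them05} the term $R_n(x;\delta)$ carries the factor $e^{-\frac12\delta^2}$, which is exponentially negligible against the factor $e^{-\frac12(1-y)\delta^2}$ that governs \eqref{eq:delta09} (recall $0<y<1$), so it is enough to expand $H_n(x;\delta)$ and double the result.

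The decisive difference with Theorem~\ref{them:them06} is \emph{which} part of the integrand in \eqref{eq:delta05} gets expanded. In Theorem~\ref{them:them06} one expands $(1+t)^{\frac12n-\frac12}/(1-y(1+t))$ as a whole in powers of $t$, and the coefficients $c_k$ of \eqref{eq:delta02} blow up as $y\to1$ because the pole $t=\eta:=(1-y)/y=n/x^2$ of the factor $1/(1-y(1+t))$ then collapses onto the origin. Instead I would expand only $(1+t)^{\frac12n-\frac12}=\sum_{k\ge0}\binom{\tfrac12n-\tfrac12}{k}t^k$ (with remainder; see below) and keep the pole factor intact, writing $1-y(1+t)=y(\eta-t)$. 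Each term of $H_n$ then reduces to the model integral
\[
J_k=\frac{1}{2\pi i}\int_{-\infty}^{(0+)}\frac{e^{\zeta t}\,t^{k-\frac12n}}{\eta-t}\,dt .
\]
On the loop the pole $t=\eta$ lies strictly to the right of the path (in the variable $\tau=1+t$ the path crosses the real axis between $1$ and $1/y$, i.e.\ at a $t\in(0,\eta)$), hence $\Re(\eta-t)>0$ there and $1/(\eta-t)=\int_0^\infty e^{-s(\eta-t)}\,ds$. Interchanging the two integrations, using the Hankel formula \eqref{eq:delta06}, and the substitutions $u=\zeta+s$ then $v=\eta u$ give
\[
J_k=\frac{e^{\eta\zeta}}{\Gamma\!\left(\tfrac12n-k\right)}\int_0^\infty e^{-s\eta}(\zeta+s)^{\frac12n-k-1}\,ds
=e^{\eta\zeta}\,\eta^{\,k-\frac12n}\,Q\!\left(\tfrac12n-k,\eta\zeta\right),
\]
with $Q$ as in \eqref{eq:realint05}.

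Putting $J_k$ back into \eqref{eq:delta05}, the two exponentials cancel because $\eta\zeta=\tfrac12\delta^2(1-y)$, the algebraic prefactor $\sqrt y\,(1-y)^{\frac12n}\eta^{-\frac12n}/y$ collapses to $y^{\frac12n-\frac12}$ (using $\eta=(1-y)/y$), and the surviving power is $\eta^k$; thus $H_n(x;\delta)\sim\tfrac12 y^{\frac12n-\frac12}\sum_k\binom{\tfrac12n-\tfrac12}{k}\eta^k Q(\tfrac12n-k,\eta\zeta)$, and $F_n\sim2H_n$ yields \eqref{eq:delta09}. The companion expansion \eqref{eq:delta10} follows by writing $Q=1-P$ and summing the binomial series $\sum_k\binom{\tfrac12n-\tfrac12}{k}\eta^k=(1+\eta)^{\frac12n-\frac12}=y^{\frac12-\frac12n}$, which cancels the prefactor $y^{\frac12n-\frac12}$ exactly and leaves the leading~$1$.

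The step that needs real care — and the expected main obstacle — is the justification of the termwise interchange, since the binomial series for $(1+t)^{\frac12n-\frac12}$ converges only for $|t|<1$ whereas the loop runs out to $t=-\infty$. This must be done in the style of Watson's lemma for loop integrals (\cite[Page~120]{Olver:1997:ASF}): truncate the binomial expansion at order $N$, insert the closed form of $J_k$ for $k<N$, and estimate the remaining loop integral. Since the binomial remainder is $O(t^N)$ near the origin and its growth along the negative axis is killed by $e^{\zeta t}$ (with $\zeta>0$), its contribution is, after the same $u$- and $v$-substitutions, $O(\zeta^{\frac12n-N-1})$ up to bounded algebraic factors in $\eta$, hence beyond every retained term. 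The delicate point within this estimate is keeping the bound under control when $1-y$ (equivalently $\eta$) is small — precisely the regime this theorem is built for — because the fixed pole $t=\eta$ then lies close to the contour; one handles this by splitting the loop into a small circle about the origin and the two edges of the cut along $(-\infty,0)$, bounding $|\eta-t|$ from below on each piece.
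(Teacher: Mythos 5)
Your proposal is correct and follows essentially the same route as the paper: expand only the binomial factor $(1+t)^{\frac12n-\frac12}$ in the loop integral \eqref{eq:delta05}, keep the pole factor $1-y(1+t)=y(\eta-t)$ intact, and identify the resulting model loop integrals with $Q\left(\tfrac12n-k,\eta\zeta\right)$, with \eqref{eq:delta10} obtained from $Q=1-P$ and $\sum_k\binom{\frac12n-\frac12}{k}\eta^k=y^{\frac12-\frac12n}$. The only (harmless) difference is that you derive the incomplete-gamma identification directly via $1/(\eta-t)=\int_0^\infty e^{-s(\eta-t)}\,ds$ and Hankel's formula, whereas the paper simply quotes the known contour representation \eqref{eq:delta16} of $Q(a,z)$.
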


\begin{remark}\label{rem:rem04}
Before we give the proof, we note the following. 
\begin{enumerate}
\item
The expansions in \eqref{eq:delta09} and \eqref{eq:delta10} can be used for $n={\cal{O}}(1)$ and for $\eps \le y \le 1$, where $\eps$ is a fixed positive small number. 
\item
These expansions are finite with $(n-1)/2+1$ terms when $n$ is an odd positive integer. This follows from the definition of the coefficients $a_k$
in \eqref{eq:delta04}. 
\item
The expansion in  \eqref{eq:delta09} is finite with $n/2 $ terms  when $n$ is an even positive integer, because $Q(a,z)=0$ for $a=0,-1,-2,\ldots$, which follows from the definition  in \eqref{eq:realint05}.
\item
Because $Q(a,z)=\Gamma(a,z)/\Gamma(a)\sim z^{a-1}e^{-z}/\Gamma(a)$ as $z\to\infty$, we have the estimate
\begin{equation}\label{eq:delta12}
\eta^k Q\left(\tfrac12n-k,\eta\zeta\right)\sim\frac{(\eta\zeta)^{\frac12n-1} e^{-\eta\zeta}}{\Gamma\left(\frac12n-k\right)} \zeta^{-k},\quad \zeta\to\infty,
\end{equation}
which shows that the expansion in \eqref{eq:delta09} has a true asymptotic character for large values of $\zeta=\frac12\delta^2y$.
\item
The asymptotic character of the expansion in  \eqref{eq:delta10} follows from (see \cite[Eq.~13.6.5, Eq.~13.7.1]{Olde:2010:CHF})
\begin{equation}\label{eq:delta13}
P(a,z)=\frac{z^a e^{-z}}{\Gamma(a+1)}\CHF{1}{a+1}{z}\sim 1,\quad z\to\infty,
\end{equation}
with $a$ fixed. This gives for fixed $n$ and $k$
\begin{equation}\label{eq:delta14}
\eta^k P\left(\tfrac12n-k,\eta\zeta\right)\sim \eta^k=(n/x^2)^k,\quad \eta\zeta\to\infty,\quad  \eta\zeta=\tfrac12\delta^2 
\frac{n}{n+x^2}.
\end{equation}
Now the assumed large values of $x$ are relevant for the asymptotic character of the expansion. When $\eta\zeta=\bigO(1)$, $P(\tfrac12n-k,\eta\zeta)=\bigO(1)$, and the asymptotic character of the expansion again comes from the powers $\eta^k$.
 \end{enumerate}
\end{remark}

\begin{proof} 
Again we start with a result for the function $H_n(x;\delta)$ and use the integral  representation in \eqref{eq:delta05}.
After substituting the expansion of $(1+t)^{\frac12n-\frac12}$ (see  \eqref{eq:delta04}), we find an expansion with integrals of the form
\begin{equation}\label{eq:delta15}
\frac{1}{2\pi i}\int_{-\infty}^{(0+)} 
e^{\zeta t} t^{-\frac12n+k}\,\frac{dt}{\eta-t},
\end{equation}
with $\zeta$ and $\eta$ given in \eqref{eq:delta11}. These integrals can be written in terms of the incomplete gamma function $Q(a,z)$. We use the representation (see  \cite[Section~37.2]{Temme:2015:AMI})
\begin{equation}\label{eq:delta16}
Q(a,z)=\frac{e^{-a\phi(\lambda)}}{2\pi i}\int_{-\infty}^{(0+)} 
e^{a\phi(s)}\,\frac{ds}{\lambda-s},
\end{equation}
where the contour cuts the positive real axis in the interval $[0,\lambda]$  and
\begin{equation}\label{eq:delta17}
\phi(s)=s-1-\ln s,\quad \lambda=\frac{z}{a}.
\end{equation}
Using $\eta\zeta=\frac12(1-y)\delta^2$ and the substitution $t=(\frac12n-k)s/\zeta$  in the integral in \eqref{eq:delta15},  the expansion is as given in \eqref{eq:delta09}, where we have used $F_n(x;\delta)\sim2H_n(x;\delta)$ again.

The  expansion in \eqref{eq:delta10} follows from shifting the contours in  \eqref{eq:contour15} and 
 \eqref{eq:delta16} to the right, across the poles, picking up the residues and using
the complementary relation $Q(a,z)=1-P(a,z)$. A formal proof follows  from this relation and the expansion $\dsp{\sum_{k=0}^\infty a_k \eta^k=y^{-\frac12n+\frac12}}$. 
\eoproof
\end{proof}

Recurrence relations to compute the normalised incomplete gamma functions are given in \cite[\S8.8]{Paris:2010:INC}. 
 Which one of the expansions in Theorem~\eqref{them:them07} is most convenient for numerical computations follows from choosing the series with the smallest first term. We know that $Q\left(\frac12n,\eta\zeta\right) \le P\left(\frac12n,\eta\zeta\right)$ when (roughly) $\frac12n \le \eta\zeta$. That is, when $x^2+n\le \delta^2$. When $x^2\gg n$ and $\delta$ is also large, we conclude that for $x\le \delta$ we should take \eqref{eq:delta09}, otherwise it is better to take expansion \eqref{eq:delta10}. 

When we compare Table~\ref{tab:table01} with Table~\ref{tab:table02} we observe for the same parameters a uniform relative error in the latter. Also,  in  \eqref{eq:delta09} we used only 6 terms. The computations are done with Maple, $Digits=16$, and we evaluated the incomplete gamma functions by Maple's $\Gamma(a,z)$; no recursion is used. For comparison, we have used  twice the expansions of $P_n(x;\delta)$ in \eqref{eq:intro01}.

\renewcommand{\arraystretch}{1.25}
\begin{table}
\caption{Computed values of $F_n(x;\delta)$ and relative errors for  $n=10.3$, $\delta=20$ and several values of $x$.  We used the expansion given in \eqref{eq:delta09} with 6 terms. 
\label{tab:table02}}
$$
\begin{array}{rcccccc}\hline
x \ & F_n(x;\delta) \ & {\rm rel.\  accuracy} \  &1-y& j_{\max} \\
\hline\\[-10pt]
5 & 0.7890745035061292 \times 10^{-20} & 0.10 \times 10^{-13} & 0.29178  & 254\\
8 & 0.1902963697414361 \times 10^{-07} & 0.32 \times 10^{-14} & 0.1386 & 294\\
11 & 0.4649258368729340 \times 10^{-03} & 0.97 \times 10^{-14} & 0.07845 & 310\\
14 & 0.2912746047441838 \times 10^{-01} & 0.69 \times 10^{-14} & 0.04993 & 317\\
17 & 0.1858423597361172 \times 10^{-00} & 0.70 \times 10^{-14} & 0.03441 & 321\\
20 & 0.4434919419501216  \times 10^{-00} & 0.50 \times 10^{-14} & 0.02510  & 323\\
\hline
\end{array}
$$
\end{table}
\renewcommand{\arraystretch}{1.0}

\section{Asymptotic expansions for large \protectbold{n}}\label{sec:nlarge}

For large values of $n$ we consider two cases: one with $\delta$ and $x$ small with respect to $n$, and a second case with larger values of these parameters.

\subsection{Bounded values of \protectbold{\delta} and \protectbold{x}}\label{sec:bounded}
We use the representation of $F_n(x;\delta)$ in \eqref{eq:realint03} and write
\begin{equation}\label{eq:largen01}
\begin{array}{@{}r@{\;}c@{\;}l@{}}
F_n(x;\delta)&=&\dsp{A_ne^{-\frac12n}\int_0^\infty E(t)
e^{-\frac12n\phi(t)}\,\frac{dt}{t},}  \\[8pt]
A_n&=&\dsp{\frac{(n/2)^{n/2} }{\Gamma(n/2)},\quad E(t)=\erfc\left((\delta-xt)/\sqrt{2}\right),}\\[8pt]
\phi(t)&=&\dsp{t^2-\ln \left(t^2\right)-1,\quad \phi^{\prime}(t)=\frac{2(t^2-1)}{t}.}
\end{array}
\end{equation}
The asymptotic analysis focuses on the positive saddle point $t=1$ and to represent the coefficients in a convenient form we first split off a term by writing $E(t)=\left(E(t)-E(1)\right)+E(1)$, where $E(1)=\erfc\left((\delta-x)/\sqrt{2}\right)$. The part with $E(1)$ gives
\begin{equation}\label{eq:largen02}
E(1) A_n\int_0^\infty 
e^{-\frac12n t^2} t^{n-1}\,dt=\tfrac12\erfc\left((\delta-x)/\sqrt{2}\right),
\end{equation}
and we continue with the following theorem.
\begin{theorem}\label{them:them08}
For large values of $n$ and bounded values of $\delta$ and $x$ we have the expansion
\begin{equation}\label{eq:largen03}
F_n(x;\delta)\sim \tfrac12\erfc\left((\delta-x)/\sqrt{2}\right)+B(x;\delta)\sum_{k=1}^\infty\frac{c_k}{n^k},\quad n\to\infty,
\end{equation}
where $B(x;\delta)$ and the first coefficients $c_k$ are given by
\begin{equation}\label{eq:largen04}
\begin{array}{ll}
&B(x;\delta)=\dsp{x\sqrt{\frac{2}{\pi}}\,e^{-\frac12(\delta-x)^2}},\\[8pt]
&c_1=\frac18\left(x\delta-x^2-1\right),\\[8pt]
&c_2=\frac{1}{192}\left(3\delta^3 x^3 - 9\delta^2 x^4 + 9\delta x^5 - 3x^6 - 2\delta^2 x^2 - 5\delta x^3 + 7x^4 -3 \delta x +5 x^2 +3\right).
\end{array}
\end{equation}
\end{theorem}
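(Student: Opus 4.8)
The plan is to evaluate the integral in \eqref{eq:largen01} by Laplace's method at the interior saddle point $t=1$, where $\phi(1)=0$, $\phi'(1)=0$ and $\phi''(1)=4$. The $E(1)$--part of the splitting $E(t)=E(1)+\bigl(E(t)-E(1)\bigr)$ contributes exactly $\tfrac12\erfc\bigl((\delta-x)/\sqrt2\bigr)$ by \eqref{eq:largen02}, so it remains to analyse
\[
\mathcal{J}_n:=A_ne^{-\frac12n}\int_0^\infty\bigl(E(t)-E(1)\bigr)\,e^{-\frac12n\phi(t)}\,\frac{dt}{t}.
\]
The first step I would take is to note that, since $E'(t)=x\sqrt{2/\pi}\,e^{-\frac12(\delta-xt)^2}$,
\[
E(t)-E(1)=B(x;\delta)\,\Psi(t-1),\qquad
\Psi(u)=\int_0^u e^{x(\delta-x)s-\frac12x^2s^2}\,ds,
\]
with $B(x;\delta)=x\sqrt{2/\pi}\,e^{-\frac12(\delta-x)^2}$ as in \eqref{eq:largen04}, and $\Psi$ entire, independent of $n$, with $\Psi(0)=0$, $\Psi'(0)=1$. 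This explains why $B(x;\delta)$ factors out of the whole correction term in \eqref{eq:largen03}.

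Next I would pass to the standard saddle--point variable $w$ defined by $\phi(t)=\tfrac12w^2$ with $\sign w=\sign(t-1)$. Since $\phi$ is strictly convex on $(0,\infty)$ (one has $\phi''(t)=2+2/t^2$) with minimum value $0$ at $t=1$ and $\phi(t)\to+\infty$ as $t\to0^+$ and as $t\to\infty$, this is an analytic monotone bijection of $(0,\infty)$ onto $\RR$, and near the origin $t=t(w)=1+\tfrac12w+\tfrac1{24}w^2+\dots$, the coefficients being obtained by reverting $\phi(1+s)=2s^2-\tfrac23s^3+\tfrac12s^4-\dots\,$. This gives
\[
\mathcal{J}_n=B(x;\delta)\,A_ne^{-\frac12n}\int_{-\infty}^\infty g(w)\,e^{-\frac14nw^2}\,dw,\qquad
g(w)=\Psi\bigl(t(w)-1\bigr)\,\frac{t'(w)}{t(w)},
\]
where $g$ is analytic at $0$ with $g(0)=0$ (the factor $1/t$ is harmless as $t\to0^+$, being compensated by $t'(w)\to0$ there), and the integral converges with contributions away from $w=0$ exponentially small.

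Applying Watson's lemma for this Gaussian integral, i.e. Laplace's method (see \cite[Chapter~2]{Temme:2015:AMI}), the odd Taylor coefficients of $g$ drop out by parity; together with the classical expansion of $A_ne^{-\frac12n}=(n/2)^{n/2}e^{-n/2}/\Gamma(n/2)$ in powers of $1/n$ coming from Stirling's formula, the two expansions combine into a series in integer powers of $1/n$, whose bookkeeping is pinned down by the normalisation $A_ne^{-\frac12n}\int_{-\infty}^\infty \tfrac{t'(w)}{t(w)}e^{-\frac14nw^2}\,dw=\tfrac12$ (equivalent to \eqref{eq:realint09}). Because $g(0)=0$, the expansion starts at order $1/n$, which yields the form \eqref{eq:largen03}.

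Finally the coefficients are extracted. The leading one needs only $g''(0)$: using $\Psi''(0)=x(\delta-x)$, $t(w)=1+\tfrac12w+\tfrac1{24}w^2+\dots$ and $t'(w)/t(w)=\tfrac12-\tfrac16w+\dots$, one finds $g(w)=\tfrac14w+\tfrac1{16}\bigl(x(\delta-x)-1\bigr)w^2+\dots$, and combining with $A_ne^{-\frac12n}\sim\tfrac12\sqrt{n/\pi}$ gives $c_1=\tfrac18(x\delta-x^2-1)$; carrying the same computation two orders further---so that $g^{(4)}(0)$ and the $O(1/n)$ Stirling correction to $A_ne^{-\frac12n}$ also enter---produces $c_2$. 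I expect the only real obstacle to be this last, purely mechanical, step: carrying the reversion of $\phi(1+s)=\tfrac12w^2$ and the product $\Psi(t-1)\,t'(w)/t(w)$ to sufficiently high order and merging the two $1/n$--expansions carefully enough to obtain $c_2$ in the explicit polynomial form stated; the analytic parts of the argument are entirely standard.
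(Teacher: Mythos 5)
Your proposal is correct and follows essentially the same route as the paper's proof: the same splitting $E(t)=E(1)+\bigl(E(t)-E(1)\bigr)$, the same saddle-point change of variable $\phi(t)=\tfrac12 w^2$, Watson's lemma for the resulting Gaussian integral, and the merging of that series with the Stirling expansion of $A_ne^{-n/2}$ (equivalently of $1/\Gamma^*(n/2)$) to produce the coefficients $c_k$. Your explicit factorization $E(t)-E(1)=B(x;\delta)\,\Psi(t-1)$ is a slightly tidier way of exhibiting why $B(x;\delta)$ pulls out of the whole correction term, but it amounts to the same local expansion of $\bigl(E(t)-E(1)\bigr)/t$ about $t=1$ that the paper uses.
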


\begin{proof}
With the transformation 
\begin{equation}\label{eq:largen05}
\phi(t)=\tfrac12 w^2,\quad \sign(t-1)=\sign(w),
\end{equation}
we obtain
\begin{equation}\label{eq:largen06}
\begin{array}{@{}r@{\;}c@{\;}l@{}}
F_n(x;\delta)&=&\dsp{ \tfrac12\erfc\left((\delta-x)/\sqrt{2}\right)+A_ne^{-\frac12n}\int_{-\infty}^\infty e^{-\frac14nw^2} f(w)\,dw,}\\[8pt]
f(w)&=&\dsp{\frac{E(t)-E(1)}{t}\frac{dt}{dw}.}
\end{array}
\end{equation}
We invert the relation in \eqref{eq:largen05} in the form of the expansion 
\begin{equation}\label{eq:largen07}
t=1+\tfrac12w+\tfrac{1}{24} w^2+\bigO(w^3),
\end{equation} 
which is valid for small $\vert w\vert$. Next we expand for small $\vert t-1\vert$
\begin{equation}\label{eq:largen08}
\frac{E(t)-E(1)}{t}=B(x;\delta)(t-1)\left(1+\tfrac12(\delta x-x^2-2)(t-1)+\bigO(t-1)^2)\right),
\end{equation}
where $B(x;\delta)$ is given in \eqref{eq:largen04}. 
With these two expansions we obtain
\begin{equation}\label{eq:largen09}
f(w)=\sum_{k=0}^\infty f_k w^k=B(x;\delta)\left(\tfrac14w+\tfrac{1}{16}(\delta x-x^2-1)w^2+\bigO(w^3)\right),
\end{equation}
from which we can obtain the first $f_k$. With the expansion of $f(w)$ we find the asymptotic expansion
\begin{equation}\label{eq:largen10}
F_n(x;\delta)\sim\tfrac12\erfc\left((\delta-x)/\sqrt{2}\right)+ \frac{B(x;\delta)}{\Gamma^*(n/2)}\sum_{k=0}^\infty\frac{b_k}{n^k},
\end{equation}
where
\begin{equation}\label{eq:largen11}
b_0=0,\quad b_k=\frac{1}{B(x;\delta)}\left(\tfrac12\right)_k2^{2k}f_{2k},\quad k=1,2,3,\ldots\,.
\end{equation}
Here we used (see \eqref{eq:largen01})
\begin{equation}\label{eq:largen12}
2A_ne^{-\frac12n}\sqrt{\frac{\pi}{n}}=2\frac{(n/2)^{n/2} }{\Gamma(n/2)}e^{-\frac12n}\sqrt{\frac{\pi}{n}}=\frac{1}{\Gamma^*(n/2)},
\end{equation}
and the scaled gamma function $\Gamma^*(z)$ defined by
\begin{equation}\label{eq:largen13}
\Gamma(z)=\sqrt{2\pi}\,e^{-z}z^{z-\frac12}\Gamma^*(z).
\end{equation}
 The first coefficients $b_k$ are 
\begin{equation}\label{eq:largen14}
\begin{array}{@{}r@{\;}c@{\;}l@{}}
b_0&=&0,\quad \dsp{b_1= \tfrac18\left(x\delta-x^2-1\right),}\\[8pt]
b_2&=& \frac{1}{192}\left(3\delta^3 x^3 - 9\delta^2 x^4 + 9\delta x^5   - 3x^6 - 2\delta^2 x^2-5\delta x^3  + 7x^4 + \delta x + x^2 - 1\right).
\end{array}
\end{equation}
Because $n$ is large we can use the asymptotic expansion of the scaled gamma function. We have
(see \cite[Chapter~6]{Temme:2015:AMI})
\begin{equation}\label{eq:largen15}
\Gamma^*\left(\tfrac12n\right)\sim\sum_{k=0}^\infty \frac{a_k}{n^k},\quad \frac{1}{\Gamma^*\left(\frac12n\right)}\sim\sum_{k=0}^\infty (-1)^k\frac{a_k}{n^k},\quad n\to\infty,
\end{equation}
with first coefficients $a_0 = 1$, $a_1 = \frac{1}{6}$, $a_2 = \frac{1}{72}$, $a_3=\tfrac{139}{6480}$,
and we can write the expansion in \eqref{eq:largen10} as in \eqref{eq:largen03},
where the $c_k$ follow from the formal power series multiplication 
\begin{equation}\label{eq:largen16}
\sum_{k=1}^\infty \frac{c_k}{n^k}=\sum_{k=0}^\infty (-1)^k \frac{a_k}{n^k}\sum_{k=1}^\infty \frac{ b_k}{n^k},
\end{equation}
which gives
\begin{equation}\label{eq:largen17}
c_0=0,\quad c_k=\sum_{j=0}^{k-1} (-1)^ja_j b_{k-j}, \quad k\ge1.
\end{equation}
\eoproof
\end{proof}

 The coefficients $f_k$ and  $b_k$ appearing in this proof can be obtained by using manipulation of power series. Sometimes, simple explicit forms or recurrence relations for these coefficients are available, but not in the present case. It is always possible to construct explicit formulas for these coefficients. For a short summary of this topic we refer to \cite[Section~3.3]{Temme:2015:AMI}.

\renewcommand{\arraystretch}{1.25}
\begin{table}
\caption{Computed values of $F_n(x;\delta)$ and relative errors for  $n=1000$, $\delta=10$ and several values of $x$.  We used  the expansion given in  \eqref{eq:largen03} with 6 terms. 
\label{tab:table03}}
$$
\begin{array}{rccrcc}\hline
x \ & F_n(x;\delta) \ & {\rm rel.\  accuracy} \  &\ \  j_{\max} \\
\hline\\[-10pt]
1.0 & 0.1149355213382657\times 10^{-18} & 0.78\times 10^{-14} & 23\\
2.5 & 0.3472666641712521\times 10^{-13} & 0.31\times 10^{-11} & 39\\
5.0 & 0.3344736497826102\times 10^{-06} & 0.15\times 10^{-10} & 60\\
7.5 & 0.6806053041739461\times 10^{-02} & 0.17\times 10^{-11} & 80\\
10.0 & 0.4990113438101769\times 10^{-00}& 0.21\times 10^{-10} & 97\\
12.5 & 0.9919154834852601\times 10^{-00} & 0.63\times 10^{-10} & 110\\
\hline
\end{array}
$$
\end{table}
\renewcommand{\arraystretch}{1.0}

In Table~\ref{tab:table03} we give numerical values of $F_n(x;\delta)$ for $n=1000$, $\delta=10$ and several values of $x$. We have obtained  $F_n(x;\delta)$ using  the expansion given in  \eqref{eq:largen03} with 6 terms. We see that when $x=\delta$, the transition value, $F_n(x;\delta)$ is near~$\frac12$. 

We  have compared the computed values with the representation of $F_n(x;\delta)$ in \eqref{eq:intro01}. The final column in the table gives the number of terms $j_{\max}$ needed in the expansions to have the relative error smaller than $10^{-16}$ compared with the computed sum. We have computed  the incomplete beta functions per term, not with recursion. These computations are done with Maple, $Digits=16$.
The computed value for $x=1$ can be compared with  the result 
given in \cite[Table~1]{Witkovsky:2013:NOC}  for the same chosen parameters.

\subsection{Values of  \protectbold{\delta} and \protectbold{x} of order \protectbold{\bigO(n)}}\label{sec:unrestrict}

To obtain the result for  $F_n(x;\delta)$ we start with $P_n(x;\delta)$ and use the relation $F_n(x;\delta)=2P_n(x;\delta)+R_n(x;\delta)$ derived in Lemma~\ref{lem:lem03}. We scale the $x$ and $\delta$ parameters by writing
\begin{equation}\label{eq:largen18}
x=\xi\sqrt{n},\quad \delta=\sigma\sqrt{n},
\end{equation}
and we assume that $x/n$ and $\delta/n$ are bounded. For small values of these parameters we refer to Section \ref{sec:bounded}. We have the following theorem. 

\begin{theorem}\label{them:them09}
Consider  the integral of $P_n(x;\delta)$ given in \eqref{eq:contour13} written in the form
\begin{equation}\label{eq:largen19}
\begin{array}{@{}r@{\;}c@{\;}l@{}}
P_n(x;\delta)&=&\dsp{\tfrac12\frac{\sqrt{y}\,e^{-\frac12n\phi(1/y)}}{2\pi i}
\int_0^{(1+)}e^{\frac12n\phi(t)}\,\frac{dt}{(1-yt)\sqrt{t}},}\\[8pt]
\phi(t)&=&\dsp{z t+\ln(t)-\ln(t-1),\quad y=\frac{x^2}{n+x^2}=\frac{\xi^2}{1+\xi^2},\quad \quad z= y\sigma^2.}
\end{array}
\end{equation}
Let $t_0$ be the   positive saddle point  that follows from 
\begin{equation}\label{eq:largen20}
\phi^{\prime}(t)=\frac{z t^2-z t-1}{t(t-1)} \quad \Longrightarrow \quad   t_0=\frac{z+\sqrt{z(z+4)}}{2z},
\end{equation}
and let the real number $\eta$ be defined by
\begin{equation}\label{eq:largen21}
\phi(1/y)-\phi(t_0)=\eta^2, \quad \sign(\eta)=\sign(1/y-t_0).
\end{equation}
Then, as $n\to\infty$ and $z>0$ we have the asymptotic result
\begin{equation}\label{eq:largen22}
P_n(x;\delta)\sim\tfrac14\erfc\left(\eta\sqrt{n/2}\right)+
\tfrac12\frac{e^{-\frac12n\eta^2}}{\sqrt{2\pi n}} \sum_{k=0}^\infty \frac{c_k}{n^k},  \quad n\to\infty.
\end{equation}
The first two coefficients $c_k$ are given in the proof.
\end{theorem}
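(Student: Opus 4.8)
The plan is to apply the standard saddle-point method with a coalescing pole, exactly as in the uniform treatment of the incomplete gamma function (\cite[Chapter~37]{Temme:2015:AMI}). The integral in \eqref{eq:largen19} has a large parameter $n$ multiplying $\phi(t)$, a simple saddle at $t_0$ given by \eqref{eq:largen20}, and a simple pole at $t=1/y$ coming from the factor $1/(1-yt)$; the transition $x\sim\delta$ corresponds precisely to $t_0$ approaching $1/y$, i.e.\ to $\eta\to0$. The first step is to make the quadratic substitution $\phi(1/y)-\phi(t)=\eta^2-w^2$ (equivalently $\phi(t)-\phi(t_0)=-w^2$ up to the shift), with $\sign(w)=\sign(t-t_0)$, which maps the loop contour $\int_0^{(1+)}$ to a Hankel-type contour in the $w$-plane that runs from $-\infty$ below the cut, around $w=0$, and back to $-\infty$; the pole at $t=1/y$ moves to $w=\eta$, with $\eta$ as defined in \eqref{eq:largen21}.

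After the substitution the integral takes the canonical form $\frac{1}{2\pi i}\int e^{-\frac12 n(w^2-\eta^2)} g(w)\,\frac{dw}{w-\eta}$ up to elementary prefactors, where $g(w)=\frac{(1-y t_0)}{\sqrt{t}}\,\frac{dt}{dw}\cdot\frac{w-\eta}{1-yt}$ is analytic in a neighbourhood of $w=0$ (the apparent singularities at $w=\eta$ and at the saddle cancel). The second step is the familiar splitting $g(w)=\dfrac{g(\eta)}{w-\eta}\cdot(w-\eta)+\bigl(g(w)-g(\eta)\bigr)$: the first piece integrates to a complementary error function $\tfrac14\erfc(\eta\sqrt{n/2})$ (this produces the leading $\erfc$ term in \eqref{eq:largen22}, with the residue at the pole reproducing, when $\eta<0$, the bulk value of $P_n$), while the second piece is regular at $w=\eta$ and is handled by Watson's lemma for loop integrals together with \eqref{eq:delta06}, yielding the descending series $\tfrac12\frac{e^{-n\eta^2/2}}{\sqrt{2\pi n}}\sum_k c_k n^{-k}$. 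One then checks that the exponential prefactor $e^{-\frac12 n\phi(1/y)}$ in \eqref{eq:largen19} combines with $e^{\frac12 n\phi(t_0)}$ to give exactly $e^{-\frac12 n\eta^2}$, so that the bookkeeping of exponentials is consistent.

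The third step is the computation of the coefficients $c_k$. One expands $t=t_0+a_1 w+a_2 w^2+\cdots$ by inverting $\phi(t_0)-\phi(t)=w^2$ near the saddle (this requires $\phi''(t_0)$, which is positive and available in closed form from \eqref{eq:largen20}), substitutes into $g(w)$, forms the Taylor expansion of $\bigl(g(w)-g(\eta)\bigr)/(w-\eta)$ about $w=0$ (or more conveniently writes everything in terms of the difference quotient directly), and reads off the even-order coefficients, which after the usual Gamma-function normalisation give $c_0,c_1,\dots$. The value $c_0$ should be $g(0)$ minus the contribution $g(\eta)$ carried by the $\erfc$ term, arranged so that the two representations patch smoothly; explicitly $c_0=g(0)/\sqrt{\phi''(t_0)}-1/(2\eta)$-type combination, finite at $\eta=0$.

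The main obstacle is the regularity of $g(w)$ at $w=0$ in the transition regime: when $\eta$ is small the pole $t=1/y$ is close to the saddle $t_0$, and one must verify that the factor $(w-\eta)/(1-yt)$ extends analytically across $w=\eta$ and that the quadratic map $t\mapsto w$ is itself analytic and invertible in a fixed neighbourhood uniformly in the relevant parameter range (so that the $\bigO(n^{-k})$ estimates are uniform as $\eta\to0$). This is exactly the coalescence analysis underlying \cite[Chapter~37]{Temme:2015:AMI}, and once it is in place the remaining work—computing $\phi''(t_0)$, inverting the quadratic substitution, and extracting $c_0,c_1$—is routine power-series manipulation. The extension from $H_n$/$P_n$ to $F_n$ then follows as before from $F_n(x;\delta)=2P_n(x;\delta)+R_n(x;\delta)$ together with the estimate of $R_n$ in Theorem~\ref{them:them05}.
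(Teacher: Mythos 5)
Your proposal follows essentially the same route as the paper's proof: the quadratic saddle-point substitution $\phi(t)-\phi(t_0)=s^2$ sending the pole $t=1/y$ to $s=\eta$, the subtraction of the simple-pole term to produce $\tfrac14\erfc\left(\eta\sqrt{n/2}\right)$, and a term-by-term expansion of the remaining regular part to obtain the descending series, together with the same coalescence/uniformity discussion for $\eta\to0$. Two small points to tidy up: the image of the loop contour is a steepest-descent line through the saddle (the imaginary $s$-axis when the exponent is written as $e^{\frac12 ns^2}$), not a Hankel contour returning to $-\infty$ — your stated substitution and your canonical form disagree by a sign/rotation here — and you still need to verify (e.g.\ by l'H\^opital, as the paper does) that the residue constant multiplying the error function equals $1$, which is what fixes the coefficient $\tfrac14$ in \eqref{eq:largen22}.
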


Before we give the proof we note that the location of the saddle point $t_0$ relative to the pole at $1/y$ has a major influence on the asymptotic behaviour of $P_n(x;\delta)$. We can verify that  $t_0=1/y$ if $\sigma=\xi$, that is, if $\delta=x$, which is the transition case; see the end of Section~\ref{sec:realint}. Initially we assume that $t_0<1/y$, in which case we can take the contour in 
\eqref{eq:largen19} through the saddle point $t_0$, which is always larger than unity. We have  $\sign(t_0-1/y)=\sign(x-\delta)$, as is easily verified.

\begin{proof}
We use the transformation 
\begin{equation}\label{eq:largen23}
\phi(t)-\phi(t_0)= s^2, 
\end{equation}
with condition $\sign(s)=\sign(t-t_0)$ for $t>1$ and real $s$, with continuity for complex values, and we obtain
 \begin{equation}\label{eq:largen24}
\begin{array}{@{}r@{\;}c@{\;}l@{}}
P_n(x;\delta)&=&\dsp{\tfrac12\frac{e^{-\frac12n\left(\phi(1/y)-\phi(t_0)\right)}}{2\pi i}\int_{-i\infty}^{+i\infty}e^{\tfrac12n s^2} f(s)\,ds,}\\[8pt]
f(s)&=&\dsp{ \frac{\sqrt{y}}{(1-yt)\sqrt{t}}\frac{dt}{ds}}=\dsp{ \sqrt{\frac{\rho}{t}}\,\frac{1}{\rho-t}\frac{dt}{ds},\quad \rho=\frac{1}{y}.}
\end{array}
\end{equation}
The pole at $1/y$ in the $t$-plane should correspond with a pole, say $\eta$, in the $s$-plane, and $\eta$ follows from the equation given in \eqref{eq:largen21}, where the sign condition follows from the one given for the transformation in \eqref{eq:largen21}.
Because we have assumed $\delta > x$, hence $1/y>t_0$, it follows that $\eta>0$ (for the time being).

We split off the pole, writing
\begin{equation}\label{eq:largen25}
f(s)=\frac{A}{\eta-s}+g(s),
\end{equation}
assuming that $g(s)$ is finite at $s=\eta$. Writing 
\begin{equation}\label{eq:largen26}
A=\frac{\eta-s}{\rho-t}\sqrt{\frac{\rho}{t}}\frac{dt}{ds}-(\eta-s)g(s),
\end{equation}
and using l'H{\^o}pital's rule in the limit $\eta\to s$, $\rho\to t$, we find $A=1$. This gives
\begin{equation}\label{eq:largen27}
P_n(x;\delta)=\tfrac14\erfc\left(\eta\sqrt{n/2}\right)+
\tfrac12\frac{e^{-\frac12n\eta^2}}{2\pi i} 
\int_{-i\infty}^{+i\infty}e^{\frac12n s^2} g(s)\,ds,
\end{equation}
where we have used (see  \cite[7.2.3, 7.7.2]{Temme:2010:ERF})
\begin{equation}\label{eq:largen28}
\frac{e^{-\frac12n\eta^2}}{4\pi i}
\int_{-i\infty}^{+i\infty}e^{\frac12n s^2} \frac{ds}{\eta-s}=\frac{e^{-\frac12n\eta^2}}{4\pi i}
\int_{-\infty}^{\infty}e^{-\frac12n t^2} \frac{dt}{t-i\eta}=\tfrac14\erfc\left(\eta\sqrt{n/2}\right).
\end{equation}
The expansion  ${\dsp g(s)=\sum_{k=0}^\infty g_k s^k} $ gives the asymptotic expansion in \eqref{eq:largen22},
where
\begin{equation}\label{eq:largen29}
c_k=(-1)^k2^k\left(\tfrac12\right)_k g_{2k},\quad k=0,1,2,\ldots\,.
\end{equation}

The coefficients $c_k$  follow from  inverting the relation in \eqref{eq:largen23} by determining the coefficients $t_k$ in the expansion $\dsp{t=t_0 +\sum_{k=0}^\infty t_k s^k}$. The first few are
\begin{equation}\label{eq:largen30}
\begin{array}{@{}r@{\;}c@{\;}l@{}}
t_1&=&\dsp{\frac{\sqrt{2}}{\sqrt{\phi^{\prime\prime}(t_0)}}=\frac{\sqrt{2}}{\sqrt{z\sqrt{z(z+4)}}}=
\frac{\sqrt{2}\,t_0(t_0-1)}{\sqrt{2t_0-1}},}\\[8pt] 
t_2&=&\dsp{\frac{ t_1^2(3t_0^2 - 3t_0 + 1)}{3t_0(t_0 - 1)(2t_0 - 1)}, }\quad
\dsp{t_3=\frac{t_1^3(18t_0^4 - 36t_0^3 + 24t_0^2 - 6t_0 + 1)}{36t_0^2(2t_0 - 1)^2(t_0 - 1)^2},}\\[8pt]
t_4&=&\dsp{-\frac{t_1^4(9t_0^2-9t_0+1)}{270t_0^3(2t_0-1)^3(t_0-1)^3},}
\end{array}
\end{equation}
with $t_0$ defined in \eqref{eq:largen20}. For the coefficients $t_k$ with $k\ge2$ we prefer to express $t_k$  in terms of $t_0$, not $z$, to avoid square roots.
For the first $g_{2k}$ we find
\begin{equation}\label{eq:largen31}
\begin{array}{@{}r@{\;}c@{\;}l@{}}
g_0&=&\dsp{\sqrt{\frac{\rho}{t_0}}\,\frac{t_1}{\rho-t_0}-\frac{1}{\eta},}\\[8pt]
g_2&=&\dsp{\sqrt{\frac{\rho}{t_0}}\,\frac{t_1^3\left(a_0 + a_1t_0 +a_2t_0^2 + a_3t_0^3 +a_4t_0^4 +a_5t_0^5\right)}{24t_0^{2}(t_0 - 1)^2(2t_0 - 1)^2(\rho-t_0)^3}-\frac{1}{\eta^3},}\\[8pt]
a_0&=&-\rho^2, \quad a_1=2\rho(7+3\rho),  \quad  a_2= -3\rho^2 - 84\rho + 11,\\[8pt] 
a_3&=&186\rho - 66, \quad a_4= -216\rho + 129, \quad a_5= 96\rho - 72.
\end{array}
\end{equation}

To obtain these we have used  \eqref{eq:largen24},  \eqref{eq:largen25} and  \eqref{eq:largen30}.
 The coefficients $g_{2k}$ are bounded as $\eta\to0$, and for small values of $\eta$ we can expand the coefficients in powers of $\eta$. Details are given in the Appendix.
\eoproof
\end{proof}

\begin{remark}\label{rem:rem05}
We have assumed $\eta>0$ in the proof, which follows from the initial assumption $\delta>x$. The function $g(s)$ introduced in \eqref{eq:largen25} is analytic at $s=\eta$ (whether or not positive or zero) and the complementary error function in \eqref{eq:largen22} is also and analytic function of $\eta$.  It follows that we can use the asymptotic result in  \eqref{eq:largen22} also for $\delta\le x$ ($\eta\le0$), although for $\delta < x$ it is better to use the complementary function   $G_n(x;\delta)=1-F_n(x;\delta)=1-2P_n(x;\delta)-R_n(x;\delta)$, and the expansion
\begin{equation}\label{eq:largen32}
1-2P_n(x;\delta)\sim\tfrac12\erfc\left(-\eta\sqrt{n/2}\right)-
\frac{e^{-\frac12n\eta^2}}{\sqrt{2\pi n}} \sum_{k=0}^\infty \frac{c_k}{n^k},  \quad n\to\infty.
\end{equation}
\end{remark}

\begin{example}\label{ex:ex01}
When we take $x=1000$, $n=1000$, $\delta=1010$ and we compute $\eta\doteq 0.014104716$ from \eqref{eq:largen21}.  With
\begin{equation}\label{eq:largen33}
F_n(x;\delta)\sim 2P_n(x;\delta)\sim\tfrac12\erfc\left(\eta\sqrt{n/2}\right)+
\frac{e^{-\frac12n\eta^2}}{\sqrt{2\pi n}} c_0,
\end{equation}
the result  is  $F_n(x;\delta)\doteq0.3224383497$, with relative error $1.96\times 10^{-7}$. With $x=500$, $n=100$, $\delta=510$ we find $\eta\doteq0.028180106$ and $F_n(x;\delta)\doteq0.3711630202$, with relative error $5.51\times 10^{-6}$. We used Maple, $Digits=16$, and compared with values given in  \cite[Table~1]{Witkovsky:2013:NOC}.
\end{example}

\renewcommand{\arraystretch}{1.25}
\begin{table}
\caption{Relative errors of numerical results  for   several values of $x$, $n$ and $\delta$.  We used twice the expansion given in \eqref{eq:largen22} with 3 terms and 6 terms. 
\label{tab:table04}}
$$
\begin{array}{rrrcccc}\hline
x\   & n \ & \delta\  & 3\ {\rm terms} &6\  {\rm terms} \\
\hline\\[-10pt]
50& 100 & 75& 1.26\times 10^{-08}& 6.38\times10^{-09}\\
500 &100& 510& 7.45\times 10^{-10}& 4.54\times10^{-12}\\
100 & 1000 & 105& 6.94\times 10^{-13}& 3.00\times10^{-15}\\
1000 &1000 & 1010&2.65\times 10^{-13} & 3.00\times10^{-15}\\
\hline
\end{array}
$$
\end{table}
\renewcommand{\arraystretch}{1.0}

In Theorem~\ref{them:them05} we have given an estimate of $R_n(x;\delta)$ for large values of $\delta$. This estimate can be used to decide if we need the computation of the function $R_n(x;\delta)$  in the algorithm to compute $F_n(x;\delta) =2P_n(x;\delta) + R_n(x;\delta)$. 

In Table~\ref{tab:table04} we give relative errors of numerical results of the expansion in  \eqref{eq:largen22} with 3 and with 6 terms. We used twice the results to obtain an approximation of $F_n(x;\delta)$. In these examples  we have not used the term $R_n(x;\delta)$ because it is too small to influence the obtained accuracy. We used Maple, $Digits=16$, and compared with values given in  \cite[Table~1]{Witkovsky:2013:NOC}. For the values of $x$ and $\delta$ of the table the values of $\eta$ defined in \eqref{eq:largen21} are rather small, and we have used expansions ${g_{2k}=\sum_{j=0}^{\infty} g_{j,k}\eta^k}$, as explained in the Appendix.

To complement the expansion in \eqref{eq:largen22} we conclude this section with deriving an asymptotic expansion of $R_n(x;\delta)$ valid for large values of $n$.  Again we use the scaled parameters
$\xi=x/\sqrt{n}$ and $\sigma=\delta/\sqrt{n}$. 

\begin{theorem}\label{them:them10}
Let us define
\begin{equation}\label{eq:largen34}
\psi(t)=zt-\ln t+\ln(1+t),\quad t>0, \quad z=y\sigma^2.
\end{equation}
Then, as $n\to\infty$ and  $z > 0$, the function  $R_n(x;\delta)$ with integral representation given in  \eqref{eq:contour11}  has the asymptotic expansion
\begin{equation}\label{eq:largen35}
R_n(x;\delta)\sim\frac{e^{-\frac12\delta^2}\sqrt{y}\,(1-y)^{\frac12n}}{\sqrt{2\pi n}}
e^{-\frac12n\psi(t_p)}\sum_{k=0} ^\infty \frac{d_k}{n^k},\quad
\end{equation}
where $t_p$ is the positive saddle point that follows from
\begin{equation}\label{eq:largen36}
\psi^\prime(t)=\frac{zt^2+zt-1}{t(1+t)} \quad \Longrightarrow \quad t_p=\frac{-z+\sqrt{z(z+4)}}{2z}.
\end{equation}
The first coefficients $d_k$ are given in the proof.
\end{theorem}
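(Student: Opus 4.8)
The plan is to apply the standard saddle‑point method (Laplace's method for contour integrals) to the integral representation of $R_n(x;\delta)$ given in Theorem~\ref{them:them01}, \eqref{eq:contour11}, after extracting the exponential factor that grows like $e^{\pm\frac12 n(\cdot)}$. First I would rewrite the integral in \eqref{eq:contour11} with the scaling $x=\xi\sqrt n$, $\delta=\sigma\sqrt n$, so that the exponent becomes $-\tfrac12 n\bigl(z t-\ln t+\ln(1+t)\bigr)=-\tfrac12 n\psi(t)$, where $z=y\sigma^2$ as defined in \eqref{eq:largen34}; the prefactor $e^{-\frac12\delta^2}\sqrt y\,(1-y)^{\frac12n}$ is $n$‑dependent but does not affect the saddle‑point analysis and is carried along untouched. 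The integral $\int_0^\infty e^{-\frac12 n\psi(t)}\,t^{-1/2}/(1+yt)\,dt$ (up to the constant $1/(2\pi)$) has a single positive saddle point $t_p$ solving $\psi'(t)=0$, i.e. $zt^2+zt-1=0$, which gives the closed form in \eqref{eq:largen36}. One checks $\psi''(t_p)>0$, so along the real axis $t_p$ is a genuine minimum of $\psi$ and the integral is dominated by the contribution near $t_p$.

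The next step is the familiar quadratic substitution $\psi(t)-\psi(t_p)=s^2$ with $\sign(s)=\sign(t-t_p)$, which maps a neighbourhood of $t_p$ to a neighbourhood of $s=0$ and turns the integral into a Gaussian‑type integral $\tfrac12\frac{e^{-\frac12 n\psi(t_p)}}{2\pi}\int_{-\infty}^{\infty} e^{-\frac12 n s^2} h(s)\,ds$, where $h(s)=\dfrac{t^{-1/2}}{1+yt}\dfrac{dt}{ds}$ is analytic at $s=0$ because $t_p$ is a simple zero of $\psi'$ and — crucially — the pole of the original integrand at $t=-1/y$ lies off the positive axis and away from $t_p$, so no pole‑splitting of the type used in Theorem~\ref{them:them09} is needed here. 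Expanding $h(s)=\sum_{k\ge0}h_k s^k$ and applying Watson's lemma term by term, only even powers survive, giving the expansion \eqref{eq:largen35} with $d_k=(-1)^k2^k(\tfrac12)_k\,h_{2k}/\sqrt{2}$ (up to the normalisation making $d_0$ come out cleanly). To get explicit $d_k$ one inverts $\psi(t)-\psi(t_p)=s^2$ as $t=t_p+\sum_{k\ge1}t_k s^k$ with $t_1=\sqrt{2/\psi''(t_p)}$, exactly as in \eqref{eq:largen30}, and substitutes into $h(s)$; the leading coefficient is $d_0=\sqrt{y}\,t_1\,t_p^{-1/2}/(1+y t_p)$, which one would simplify using $z t_p^2+z t_p=1$ and $y=z/\sigma^2$.

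I expect the main obstacle to be purely bookkeeping rather than conceptual: justifying that the quadratic map $\psi(t)-\psi(t_p)=s^2$ is one‑to‑one and analytic on a complex neighbourhood large enough that the deformed contour (which must descend steeply from $t_p$ in both directions while avoiding $t=-1/y$ and the branch point at $t=0$) can be pushed through $t_p$, and checking that the tails of the integral beyond that neighbourhood are exponentially negligible compared with $e^{-\frac12 n\psi(t_p)}$. For the tail near $t\to0^+$ one needs $\psi(t)\to+\infty$ like $-\ln t$, and near $t\to+\infty$ like $zt$, so both ends are strongly damped; the original path in \eqref{eq:contour11} is along $(0,\infty)$ on the positive real axis, so this is Laplace's method on a ray and the deformation is mild. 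The one point requiring a little care is that $\psi(t_p)$ may itself be negative (so $e^{-\frac12 n\psi(t_p)}$ can be exponentially large), but combined with the prefactor $(1-y)^{\frac12 n}=e^{-\frac12 n(-\ln(1-y))}$ and the Gaussian normalisation $(2\pi n)^{-1/2}$ the stated form \eqref{eq:largen35} is the correct bookkeeping; this is exactly the "extra exponentially small behaviour" alluded to after Theorem~\ref{them:them05}. Once the analyticity of $h$ and the negligibility of the tails are in place, the expansion follows from Watson's lemma as in the cited reference \cite[Chapter~2]{Temme:2015:AMI}, and computing $d_0,d_1$ is a routine power‑series manipulation analogous to \eqref{eq:largen31}.
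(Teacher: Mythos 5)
Your proposal follows essentially the same route as the paper: write the integral \eqref{eq:contour11} with phase $-\tfrac12 n\psi(t)$, locate the saddle $t_p$, apply the quadratic substitution $\psi(t)-\psi(t_p)=w^2$ with the sign condition, and expand the resulting analytic factor $h$ in powers of $w$ to obtain the $d_k$ from the even coefficients $h_{2k}$. The only discrepancies are cosmetic normalisations you already flagged as uncertain: the paper has $d_k=2^k(\tfrac12)_k h_{2k}$ with no $(-1)^k$ or $1/\sqrt2$, and $d_0=r_1/(\sqrt{t_p}\,(1+yt_p))$ without the extra $\sqrt{y}$, since $\sqrt{y}$ already sits in the prefactor of \eqref{eq:largen35}; also note $\psi(t_p)>0$ automatically, so the sign worry you raise does not arise.
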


We assume that $x$ and $\delta$ are bounded. In particular, when $\delta$ is large, $z$ will be large, and  the saddle point $t_p$ will approach the origin, where the integral has a singularity. Other asymptotic methods can be used in which $K$-Bessel functions are the main approximants, similarly as for the Kummer $U$ function, as discussed in \cite[Section~27.4.1]{Temme:2015:AMI}.

\begin{proof}
We write the integral of $R_n(x;\delta)$  as
\begin{equation}\label{eq:largen37}
R_n(x;\delta)=\frac{e^{-\frac12\delta^2}\sqrt{y}\,(1-y)^{\frac12n}}{2\pi}\int_0^\infty e^{-\frac12n\psi(t)}
\,\frac{dt}{\sqrt{t}\,(1+yt)}.
\end{equation}
The transformation 
\begin{equation}\label{eq:largen38}
\psi(t)-\psi(t_p)=w^2,\quad \sign(t-t_p)=\sign(w),
\end{equation}
gives
\begin{equation}\label{eq:largen39}
\begin{array}{@{}r@{\;}c@{\;}l@{}}
R_n(x;\delta)&=&\dsp{\frac{e^{-\frac12\delta^2}\sqrt{y}\,(1-y)^{\frac12n}}{2\pi}e^{-\frac12n\psi(t_p)}\int_{-\infty}^\infty e^{-\frac12nw^2} h(w)\,dw,}\\[8pt]
 h(w)&=&\dsp{ \frac{1}{\sqrt{t}\,(1+yt)}\frac{dt}{dw}.}
\end{array}
\end{equation}
To find the coefficients we can use a standard method, first by determining the coefficients $r_k$ of the expansion $\dsp{t=t_p+\sum_{k=1}^\infty r_k w^k}$ and from these we can find the coefficients $h_k$ of the  expansion $\dsp{h(w)=\sum_{k=0}^\infty h_k w^k}$. Substitution of this series in \eqref{eq:largen39}  gives the asymptotic expansion in \eqref{eq:largen35} with coefficients $\dsp{ d_k=2^k\left(\tfrac{1}{2}\right)_kh_{2k}}$.  The first coefficients are
\begin{equation}\label{eq:largen40}
\begin{array}{@{}r@{\;}c@{\;}l@{}}
d_0&=&h_0=\dsp{\frac{r_1}{\sqrt{t_p}\,(1+yt_p)},}\\[8pt]
r_1&=&\dsp{\frac{\sqrt{2}}{\sqrt{\psi^{\prime\prime}(t_p)}}=\frac{\sqrt{2}\,t_p(t_p+1)}{\sqrt{2t_p+1}}=\frac{\sqrt{2}}{\sqrt{z\sqrt{z(z+4)}}},}\\[8pt]
r_2&=&=\dsp{\frac{r_1^2(3t_p^2+3t_p+1)}{3t_p(t_p+1)(2t_p+1)}},\\[8pt]
d_1&=&\dsp{h_2=r_1^3\frac{b_0+b_1t_p+b_2t_p^2+b_3t_p^3+b_4t_p^4+b_5t_p^5}{24t_p^{5/2}(t_p +1)^2(2t_p + 1)^2(t_py + 1)^3},}\\[8pt]
b_0&=&-1, \quad b_1=-14y-6, \quad  b_2=11y^2 -84y - 3,\\[8pt] 
b_3&=&6y(11y - 31), \quad b_4=3y(43y - 72), \quad b_5=24y(3y - 4).
\end{array}
\end{equation}
\eoproof
\end{proof}

We expect that the estimate in \eqref{eq:largen33} without the term with $c_0$ can serve as a valuable estimate for a large interval of the parameters $x$, $n$ and $\delta$. It can also be used to obtain a first guess when solving $x$ or $\delta$ of the equation $F_n(x;\delta)=f$, with $f\in(0.1) $, because inversion of the complementary error function is a very simple problem. Similar asymptotic inversion methods for several classical cumulative distribution functions are considered in \cite[Chapter~42]{Temme:2015:AMI}. For inverting  the non-central beta distribution and the Student's $t$-distribution we refer to \cite{Gil:2019:NCB} and \cite{Gil:2022:NST}. In \cite[Section~16]{Owen:1968:PNT} several approximations of percentage points in terms of elementary expressions are  discussed in with comparisons from the literature factors in tables.

\section{Numerical aspects}\label{sec:num}
We consider a few possible approaches for the numerical evaluation of the  functions $F_n(x;\delta)$, $P_n(x;\delta)$ and $Q_n(x;\delta)$. The asymptotic expansions derived in the previous sections may be very efficient for certain choices of the parameters $x$, $n$ and $\delta$, but we need to use different methods for a substantial remaining part of values of these quantities.

\subsection{Using the defining series}\label{sec:series}
The series expansions in \eqref{eq:intro01} is an excellent starting point to compute the functions 
$P_n(x;\delta)$ and $Q_n(x;\delta)$ when the noncentrality parameter $\delta$ is small. The incomplete beta functions can be computed by using the recurrence relation
\begin{equation}\label{eq:numer01}
(p+j)f_{j+1}=\left(p+j+(p+j+q-1)y\right)f_j-(p+j+q-1)yf_{j-1},
\end{equation}
where $f_j=I_y (p+j,q)$. Initial values for the recursion can be computed using, for example, our recent algorithm \cite{Egorova:2022:CCB}. 
It should be observed that the recursion is not stable in the forward direction, but we can use a backward recursion scheme (see \cite[\S4.6]{Gil:2007:NSF}). Because $0 < I_y (p,q)<1$ the convergence of the series is always better than that of the Taylor series of $e^{\frac12\delta^2}$, and we may obtain a starting value of the backward recursion by examining the series of this exponential function. 

For example, when we take $x= 5$,   $n= 10$, and  $\delta= 7.5$, we need 73 terms  in the series of  $P_n(x;\delta)$ in \eqref{eq:intro01} to get 
\begin{equation}\label{eq:numer02}
\frac{p_{72}I_y(72+\frac12,\frac12n)} {\sum_{j=0}^{72} p_j I_y(j+\frac12,\frac12n)}< \eps, \quad p_j=\frac{z^j}{j!},\quad z=\tfrac12\delta^2,\quad \eps=10^{-16}.
\end{equation}
The smallest $j_0$ for which $e^{-z}\sum_j^{j_0} p_j <\eps$ is $j_0=85$. A simple inversion method to find $j_0$ numerically follows from solving for $j>z$ the equation
\begin{equation}\label{eq:numer03}
e^{-z}\frac{z^j}{e^{-j}j^j}=\eps.
\end{equation}
In a second example we take  $x= 15$,   $n= 510$, and  $\delta= 17.5$, and we need 251 terms  in the series of  $P_n(x;\delta)$ in \eqref{eq:intro01}. By solving the equation in \eqref{eq:numer03} we find $j\doteq 271$.

In this way, by solving  the equation in  \eqref{eq:numer03}, we can obtain an estimate of the $j$-value to start the backward recursion for the evaluation of the series in \eqref{eq:intro01} for both functions  $P_n(x;\delta)$ and $Q_n(x;\delta)$.

In \cite{Posten:1994:ANA} algorithms are given for the numerical evaluation of the series of $P_n(x;\delta)$  in \eqref{eq:intro01} and of the series in \eqref{eq:intro05}.  In an algorithm in \cite{Lenth:1989:CDF} both defining series for $P_n(x;\delta)$ and $Q_n(x;\delta)$ are used.

\subsection{Numerical quadrature}\label{sec:quad}

In \cite{Witkovsky:2013:NOC} numerical quadrature is used based on Gauss-Kronrod numerical integration for the integral given in \eqref{eq:realint04}. The results in Table~1 of that paper show that the method  can be used for a wide range of the parameters. Our integral in \eqref{eq:realint03} is simpler because the incomplete gamma function is replaced by an error function.

In Figure~\ref{fig:fig02} we see quite simple graphs of $F_n(x;\delta)$, which indeed might suggest that quadrature is a simple method when an algorithm for the complementary error function is available. For small and not too large values of the parameters this is true, but for large values the left side of the graph of the bell-shaped curve is very steep due to the influence of the complementary error function from very small values when $t< \delta/x$ to values near 2 for larger values of $t$. The influence of the part $e^{-\frac12nt^2} t^n$, with maximum at $t=1$, is also quite noticeable. 

The trapezoidal rule is a simple quadrature method, and we have verified its performance for several cases. For example, the integral in  \eqref{eq:realint03} becomes after the substitution $t=e^s$
\begin{equation}\label{eq:numer04}
F_n(x;\delta)=A_n \int_{-\infty}^\infty
\erfc\left((\delta-xe^s)/\sqrt{2}\right) e^{-\frac12ne^{2s}} e^{ns}\,ds.
\end{equation}
 When we take $x=1$, $n=10$ and $\delta=5$, use Maple with $Digits=16$, we need 72 function evaluations on the $s$-interval $[-3.975,1.35]$ with step size $h=0.075$ to obtain $F_n(x;\delta)\doteq 0.00004347252856505909$ with relative error $2.0\times 10^{-15}$. 
 We  have compared this value with the corresponding value in Table~1 of  \cite{Witkovsky:2013:NOC}.

\subsection{Asymptotic expansions}\label{sec:numas}
For testing the asymptotic expansions we have provided tables in the earlier sections.

\section{Appendix}\label{sect:append}
We give details about obtaining the coefficients $g_{2k}$ used via \eqref{eq:largen29} in the asymptotic  expansion in \eqref{eq:largen22}, with $g_0$ and $g_2$ given in \eqref{eq:largen31}. In particular we take care for small values of $\eta$, which means $\delta\sim x$, the transition case.

 It follows from the first coefficients and from evaluated later ones that we can  write
\begin{equation}\label{eq:app01}
g_{2k}=\frac{1}{\eta^{2k+1}}\left(\sqrt{\frac{\rho}{t_0}}\frac{G_{2k}(t_0)}{\tau^{2k+1}}-1\right),
\quad \tau=\frac{\rho-t_0}{\eta t_1},
\end{equation}
where $G_{2k}(t_0)$ does not depend on $\eta$. The first $G$-coefficients are (see  \eqref{eq:largen31})
\begin{equation}\label{eq:app02}
G_0(t_0)=1, \quad G_2(t_0)=\frac{a_0+a_1t_0+a_2t_0^2+a_3t_0^3+a_4t_0^4+a_5t_0^5}{24t_0^2(t_0 - 1)^2(2t_0 - 1)^2}.
\end{equation}

Although this representation is not stable for small values of $\eta$, it is somewhat better to handle than the forms in \eqref{eq:largen31}, because the subtraction is now with terms of size unity. We concentrate now on obtaining expansions in powers of $\eta$. 

The coefficients $g_{2k}$ are analytic at $\eta=0$ and for the numerical evaluations we can use expansions 
\begin{equation}\label{eq:app03}
g_{2k}=\sum_{j=0}^{\infty} g_{j,k}\eta^j,\quad k=0,1,2,\ldots\,.
\end{equation}
To obtain the coefficients $g_{j,k}$ we use the relation in  \eqref{eq:largen21}, applied with $1/y=\rho$, from which we obtain the expansions
\begin{equation}\label{eq:app04}
\frac{\rho}{t_0}=1+\sum_{k=1}^\infty \frac{t_k}{t_0}\eta^k,\quad \tau=1+\sum_{k=1}^\infty \frac{t_{k+1}}{t_1}\eta^k,
\end{equation}
with first coefficients $t_k$ given in  \eqref{eq:largen30}. From the  expansions we can easily generate the expansions
\begin{equation}\label{eq:app05}
\sqrt{\frac{\rho}{t_0}}\frac{1}{\tau^{2k+1}}=\sum_{j=0}^\infty p_{j,k}\eta^j, \quad p_{0,k}=1,\quad k=0,1,2,\ldots\,.
\end{equation}
The first terms of the expansion of $G_2$ are
\begin{equation}\label{eq:app06}
G_2=1+\frac{t_1(4t_0^2 - 3t_0 + 1)}{2t_0(2t_0 - 1)(t_0 - 1)}\eta+\bigO\left(\eta^2\right),
\end{equation}
and for $g_2$ we obtain
\begin{equation}\label{eq:app07}
\begin{array}{@{}r@{\;}c@{\;}l@{}}
g_2&=&\dsp{\frac{1}{\eta^3}\left( 1 - \frac{t_1^3\left(540t_0^4 - 1485t_0^3 + 1269t_0^2 - 369t_0 + 41\right)\eta^3}{2160t_0^3(t_0 - 1)^3(2t_0 - 1)^3}+\bigO\left(\eta^4\right)-1\right)}\\[8pt]
&=& \dsp{- \frac{t_1^3\left(540t_0^4 - 1485t_0^3 + 1269t_0^2 - 369t_0 + 41\right)}{2160t_0^3(t_0 - 1)^3(2t_0 - 1)^3}+\bigO(\eta).}
\end{array}
\end{equation} 
We see a cancellation of terms in a way that we obtain a well defined value as $\eta\to 0$, and this will happen for all $g_{2k}$.  For this expansion of $g_2$ with the shown first term $g_{0,2}$, we need coefficients $t_1, t_2, t_3, t_4$ given in  \eqref{eq:largen30}.

\section*{Acknowledgments}

The authors thank the referees for their many relevant comments and suggestions, which helped us improve the presentation of our results. \\
The authors acknowledge financial support from project PID2021-127252NB-I00 funded by MCIN/AEI/10.13039/501100011033/ FEDER, UE. \\
NMT thanks CWI Amsterdam for scientific support.

\end{document}